\newcommand{\curlT}{\mathcal T}
\newcommand{\si}{\sigma}
\newcommand{\downto}{\searrow}
\newcommand{\ga}{\gamma}
\newcommand{\ep}{\epsilon}
\newcommand{\al}{\alpha}
\newcommand{\La}{\Lambda}
\newcommand{\grad}{{{}^h \nabla}}
\newcommand{\gradh}{{{}^h \nabla}}
\newcommand{\gradell}{{{}^{\ell} \nabla}}
\newcommand{\gradgt}{{{}^{g} \nabla}}
\newcommand{\gradg}{\nabla}
\newcommand{\gradgg}{{{}^{g}\nabla}}
\newcommand{\lap}{\Delta}
\newcommand{\de}{\delta}
\renewcommand{\phi}{\varphi}
\newcommand{\be}{\beta}
\newcommand{\ti}{\tilde}
\newcommand{\partt}{\frac{\partial}{\partial t}}
\newcommand{\parttau}{ \frac{\partial}{\partial \tau}}
\newcommand{\upto}{\nearrow}
\newcommand{\e}{{\rm \epsilon}}
\newcommand{\map}{{\rm \hspace{1mm} \rightarrow \hspace{1mm}}}
\newcommand{\Sc}{\mathrm{R}}
\newcommand{\Rm}{\mathrm{Rm}}
\renewcommand{\si}{\sigma}
\newcommand{\Riem}{\Rm}
\newcommand{\Ric}{\mathrm{Ric}}
\newcommand{\Rc}{\Ric}
\newcommand{\cc}{\subset\joinrel\subset}
\newcommand{\N}{\mathbb N}
\newcommand{\R}{\mathbb R}
\newtheorem{thm}{Theorem}[section]
\newtheorem{lem}[thm]{Lemma}
\theoremstyle{definition}
\newtheorem{defi}[thm]{Definition}
\theoremstyle{remark}
\newtheorem{rem}[thm]{Remark}
\numberwithin{equation}{section}
\newcommand{\abs}[1]{|#1|}
\newcommand{\der}[5]{\frac{{#4}^{#3} #1}{#5 {#2}^{#3}}}
\newcommand{\pd}[2]{\der{#1}{#2}{}{\partial}{\partial}}
\let\div\relax
\DeclareMathOperator{\div}{div}
\DeclareMathOperator{\vol}{vol}
\newcommand{\loc}{\mathrm{loc}}
\let\epsilon\varepsilon
\def\qqquad{\hskip3em\relax}
\def\qqqquad{\hskip4em\relax}
\def\qqqqquad{\hskip5em\relax}
\title[Ricci--DeTurck flow of almost continuous \texorpdfstring{$L^2$}{L\textasciicircum{}2}-metrics]{Ricci--DeTurck flow of almost continuous \texorpdfstring{$L^2$}{L\textasciicircum{}2}-metrics, and metrics with distributional scalar curvature bounded from below}
\date{November 28, 2025}
\subjclass[2020]{Primary 53E20; Secondary 47J35, 35K55.}
\keywords{Ricci flow, weak metrics, distributional scalar curvature}
\author[F. Litzinger]{Florian Litzinger}
\address{Otto-von-Guericke-Universität Magdeburg, Fakultät für Mathematik, Institut für Analysis und Numerik, Universitätsplatz 2, 39106 Magdeburg, Germany}
\email{florian.litzinger@ovgu.de}
\author[M. Simon]{Miles Simon}
\address{Otto-von-Guericke-Universität Magdeburg, Fakultät für Mathematik, Institut für Analysis und Numerik, Universitätsplatz 2, 39106 Magdeburg, Germany}
\email{miles.simon@ovgu.de}
\begin{document}

\begin{abstract}
We consider Riemannian manifolds $(M^n,g_0)$, $(M^n,h)$, where $(M^n,h)$ is smooth, complete, with curvature bounded in absolute value by $K_0 < \infty$, and $(1-\varepsilon_0(n)) h \leq g_0 \leq (1+\varepsilon_0(n)) h$ for some small $\varepsilon_0(n)>0$.
It was shown by Simon (2002) that a Ricci--DeTurck flow solution $g(t)_{t \in (0,T)}$ related to $g_0$ exists for some $T=T(n,K_0)>0$. 
If $g_0 \in L^2_{\mathrm{loc}}$ or $g_0 \in W^{1,2+2\sigma}_{\mathrm{loc}}$, $\sigma \in (0,\frac{1}{4})$, respectively, we show that $g(t) \to g_0$ in the $L^2_{\mathrm{loc}}$- or $W^{1,2+\sigma}_{\mathrm{loc}}$-sense, respectively.
If $M$ is closed, $g_0 \in W^{1,2+\sigma}(M)$ for some $\sigma>0$, and the distributional scalar curvature of Lee--LeFloch (2015) is not less than $b \in \mathbb{R}$, then we show that $g(t)$ has scalar curvature not less than $b$ in the smooth sense for all $t>0$.
\end{abstract}

\maketitle

\section{Introduction}

In this paper we consider the Ricci--DeTurck flow (and  related Ricci flows) of Riemannian manifolds $(M,g_0)$ which are \emph{almost continuous} in the sense that
\begin{equation*}
  (1-\ep_0(n)) h \leq g_0 \leq (1+\ep_0(n)) h
\end{equation*}
for some appropriately chosen small positive $\ep_0(n)>0$, where $(M,h)$ is a smooth complete space with bounded curvature.
The existence and regularity of this flow was studied in Simon~\cite{simon2002deformation}: see Theorem~\ref{C0RicciDeTurckthm} of this paper for an overview of some of the results obtained therein. In a paper that first appeared seven years later, Koch--Lamm investigated existence and regularity for the Ricci--DeTurck flow  (and other geometric flows) of almost continuous metrics using a direct fixed point argument \cite{KochLamm1} (see also \cite{KochLamm2}). 

The first result of the present paper shows that if $g_0$ is an almost continuous metric which is locally in $L^2$, then the initial data is achieved locally in the $L^2$-sense, and if the metric $g_0$ is locally in $W^{1,2}$, then the $L^2$-initial value is locally achieved at a rate of $Ct$ (and further estimates hold).  
In the following, and in the remainder of the paper, $|\cdot|$ denotes the norm and $\gradh$ the covariant derivative, both with respect to a smooth Riemannian metric $h$. The notation $B_h(x_0,r)$ refers to a ball of radius $r>0$ centered at $x_0 \in M$, measured with respect to $h$. 

\begin{thm}
  \label{main0}
  For any $n \in \N$ there exists $\ep_0(n)>0$   such that the following holds.
  Let $(M^n,h)$ be a smooth, connected, complete $n$-dimensional Riemannian manifold such that $K_j:= \sup_M |\gradh^j \Rm(h)| < \infty$ for all $j \in \N_0$.
  Assume further that $g_0$ is a Riemannian metric on $M$ for which $(1-\ep_0(n)) h \leq g_0 \leq (1+\ep_0(n)) h$ holds.  
  \begin{enumerate}[label=(\roman*)]
    \item
    If $g_0 \in L^2_{\loc}(M)$, then the solution $g(t)_{t\in (0,T(n,K_0)]}$ 
     of the $h$-Ricci--DeTurck flow constructed in \cite{simon2002deformation} (see Theorem~\ref{C0RicciDeTurckthm} of this paper) satisfies
    \begin{equation*}
      \int_{B_h(x_0,r)} |g(t)-g_0|^2 dh + \int_{0}^t \int_{B_h(x_0,r)} |\gradh g(s)|^2 dh \, ds \to 0
    \end{equation*}
    as $t \downto 0$ for all $x_0 \in M$ and all $r>0$.
    \item
    If $g_0 \in W^{1,2}_{\loc}(M)$, then the solution  $g(t)_{t\in (0,T(n,K_0)]}$  of the $h$-Ricci--DeTurck flow constructed in \cite{simon2002deformation} (see Theorem~\ref{C0RicciDeTurckthm} of this paper) satisfies
    \begin{align*} 
      &\int_{B_h(x_0,r)} |g(t) - g_0|^2 dh + \int_0^t \int_{B_h(x_0,r)} |\gradh g(s)|^2 dh \, ds \\
      &\quad \leq t \cdot c(n, r,\ell,K_0) \left( 1 + \int_{B_h(x_0,\ell)}|\gradh g_0|^2 \right)
    \end{align*} 
    for all $x_0 \in M$, all $0<r<\ell$, and all $t \in (0,T]$.
  \end{enumerate}
\end{thm}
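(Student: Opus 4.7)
The plan for both parts is to derive a localized energy identity from the $h$-Ricci--DeTurck equation, written schematically as
\[ \partial_t g = g^{pq} \gradh_p \gradh_q g + F(g, g^{-1}, \gradh g, \Rm(h)), \qquad |F| \leq C(|\gradh g|^2 + 1), \]
uniformly parabolic because the closeness of $g$ to $h$ provided by Theorem~\ref{C0RicciDeTurckthm} keeps $g^{pq}$ comparable to $h^{pq}$. Fix a cutoff $\phi \in C_c^\infty(M)$ with $0 \leq \phi \leq 1$, $\phi \equiv 1$ on $B_h(x_0, r)$, support in $B_h(x_0, \ell)$, and $|\gradh \phi|_h \leq C/(\ell - r)$; such a cutoff exists by the bounded geometry of $(M, h)$.

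For part (ii), I apply the energy method directly to $v := g - g_0$, which is admissible because $g_0 \in W^{1,2}_\loc$ makes $\gradh v = \gradh g - \gradh g_0$ belong to $L^2_\loc$. Multiplying by $\phi^2 v$, integrating against $dh$, and integrating by parts once on the principal term, the leading piece $-\int \phi^2 g^{pq} \gradh_p v \cdot \gradh_q g$ splits via $\gradh g = \gradh v + \gradh g_0$ into a good term bounded above by $-c_0 \int \phi^2 |\gradh v|^2$ and a cross term absorbed by Young's inequality at the cost of $C \int \phi^2 |\gradh g_0|^2$. The remaining boundary and $F$-terms are controlled using the $L^\infty$ bounds on $g$, $g^{-1}$, and $v$ together with the properties of $\phi$, yielding
\[ \tfrac{d}{dt} \int \phi^2 |v|^2 \, dh + c_0 \int \phi^2 |\gradh v|^2 \, dh \leq C(n, r, \ell, K_0)\Bigl(1 + \int_{B_h(x_0, \ell)} |\gradh g_0|^2 \, dh\Bigr). \]
Integrating this from $\tau \in (0, t)$ to $t$, sending $\tau \downto 0$ using part (i) (which ensures $\int \phi^2 |v(\tau)|^2 \to 0$), and then using $|\gradh g|^2 \leq 2|\gradh v|^2 + 2|\gradh g_0|^2$ to convert the gradient term, yields the stated estimate.

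For part (i) the quantity $\gradh g_0$ is not a function, so I apply the energy identity instead to $w := g - h$, for which $\gradh w = \gradh g$ pointwise. The analogous computation, with $|w| \leq \vare_0$ handling the lower order terms, gives
\[ \int \phi^2 |w(t)|^2 \, dh + 2 c_0 \int_\tau^t \int \phi^2 |\gradh g|^2 \, dh \, ds \leq \int \phi^2 |w(\tau)|^2 \, dh + C(n, r, \ell, K_0)(t - \tau) \]
for all $0 < \tau < t$. The additional ingredient needed is weak $L^2_\loc$-continuity at $t = 0$: testing the flow equation against $\eta \in C_c^\infty(M)$ and integrating by parts to place the $\gradh$-derivatives on $\eta$, together with the smoothing bound $|\gradh g(s)|_h \leq C/\sqrt{s}$ from Theorem~\ref{C0RicciDeTurckthm}, gives $|\int_M \eta \cdot (g(t) - g(\tau)) \, dh| \leq C_\eta (\sqrt{t} + \sqrt{\tau}) \to 0$, so $g(\tau)$ admits a weak $L^2_\loc$-limit as $\tau \downto 0$; this limit is identified with $g_0$ through Simon's construction of $g(t)$ as a limit of mollified smooth flows. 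Weak lower semicontinuity then yields $\liminf_{\tau \downto 0} \int \phi^2 |w(\tau)|^2 \, dh \geq \int \phi^2 |w_0|^2 \, dh$, while letting $\tau \downto 0$ and then $t \downto 0$ in the energy inequality supplies the matching $\limsup \leq$. The Hilbert-space fact that weak convergence together with norm convergence implies strong convergence gives $g(t) \to g_0$ in $L^2(B_h(x_0, r), dh)$, and rearranging the energy inequality delivers $\int_0^t \int \phi^2 |\gradh g|^2 \, dh \, ds \to 0$.

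The main technical obstacle is the identification step in part (i): showing that the weak $L^2_\loc$-limit of $g(\tau)$ as $\tau \downto 0$ really equals $g_0$, for initial data that is merely $L^\infty$-close to $h$ and in general discontinuous. Since no classical $C^0$-continuity at $t = 0$ is available in this setting, the identification must be extracted from the mollified smooth approximations used in Simon's construction of $g(t)$ together with their uniform convergence on compact subsets of $M \times (0, T]$.
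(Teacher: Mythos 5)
Your part (ii) argument is sound (modulo its reliance on part (i)) and is essentially a direct version of what the paper does through its mollified approximants: the paper writes the same weighted energy inequality but for the smooth approximating flows $g_i(t)$ against the smooth data $g_i(0)$, then passes to the limit $i\to\infty$, whereas you test directly against $v = g - g_0 \in W^{1,2}_{\loc}$ and send $\tau\downto 0$. That is a reasonable alternative route.

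Part (i), however, has two genuine gaps.

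\emph{First}, the claimed weak-continuity rate does not follow from the smoothing bound. After integrating by parts in $\int_\tau^t\int_M \eta\,\partial_s g\,dh\,ds$, the terms that remain include $\int \eta\,\gradh(g^{ab})\gradh_b g$ (from the derivative falling on the inverse metric) and the quadratic reaction term $\int\eta\,(g^{-1}*g^{-1}*\gradh g*\gradh g)$, both of which are pointwise of size $|\gradh g(s)|^2 \lesssim C/s$ by Theorem~\ref{C0RicciDeTurckthm}. Their time integral on $(\tau,t)$ is only controlled by $C\log(t/\tau)$, which diverges as $\tau\downto 0$; it is not $O(\sqrt t + \sqrt\tau)$. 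Integrating by parts twice to put both derivatives on $\eta$ does not help, since $\gradh^2(g^{ab})$ produces $|\gradh^2 g| + |\gradh g|^2 \lesssim C/s$ as well. Weak convergence of $g(\tau)$ in $L^2_{\loc}$ is still true (e.g.\ by uniform $L^\infty$-boundedness plus a separate argument, or by passing through the mollified construction), but not by the estimate you invoke.

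\emph{Second}, and more fundamentally, your ``matching $\limsup \leq$'' does not follow from the energy inequality alone. The inequality
\[
\int \phi^2 |w(t)|^2\,dh + 2c_0\int_\tau^t\!\!\int \phi^2|\gradh g|^2\,dh\,ds \;\leq\; \int \phi^2 |w(\tau)|^2\,dh + C(t-\tau)
\]
only compares $w(t)$ to $w(\tau)$ and contains no reference to $w_0 = g_0 - h$. Sending $\tau\downto 0$ and then $t\downto 0$ gives a tautology; it does not rule out that $\lim_{\tau\downto 0}\int\phi^2|w(\tau)|^2$ exceeds $\int\phi^2|w_0|^2$ (a ``jump'' at $t=0$). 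Weak lower semicontinuity gives $\liminf\geq$, so what is missing is precisely the concrete link to $g_0$, which can only come from the mollified approximations $g_i(0)\to g_0$ in $L^2_{\loc}$. This is exactly where the paper's strategy differs: the paper proves the inequality for $g_i(t)$ against an auxiliary target $g_j(0)$, obtaining
\[
\int_{B_h(x_0,r)}|g_i(t)-g_j(0)|^2 + \int_0^t\!\!\int_{B_h(x_0,r)}|\gradh g_i|^2 \;\leq\; \int_{B_h(x_0,\ell)}|g_i(0)-g_j(0)|^2 + c_j\,t,
\]
then takes $i\to\infty$, applies the triangle inequality $|g(t)-g_0| \leq |g(t)-g_j(0)| + |g_j(0)-g_0|$, and finally does a diagonal argument in $j$: choosing $t\leq T_j := v_j/c_j$ (where $v_j$ controls the $L^2$-error of the $j$th mollification) makes both right-hand-side contributions small simultaneously. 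You correctly flagged the identification of the weak limit as the ``main technical obstacle,'' but the proposal neither carries out this identification nor provides the $\limsup\leq$ step; it is not merely technical but the crux of the proof. To repair your argument you would have to re-insert the mollified initial data $g_j(0)$ into the energy inequality and run the same diagonalization, at which point the approach essentially coincides with the paper's.
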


We then consider almost continuous metrics $g_0$ which are in $W^{1,2+2\si}_{\loc}(M,h)$ for some $\frac{1}{4}>\si \geq 0$. We show that the solution $g(t)$ constructed in Simon~\cite{simon2002deformation} converges to $g_0$ in $W^{1,2+\si}_{\loc}(M,h)$ as $t \downto 0$, and we prove estimates on the rate at which the $W^{1,2+\si}(U)$-norms can change for $U$ compactly contained in $M$.

\begin{thm}
  \label{main1}
  For any $n\in \N$ there exists $\ep_0(n)>0$ such that the following holds. 
  Let $(M^n,h)$ be a smooth, connected, complete $n$-dimensional Riemannian manifold such that $K_j:= \sup_M |\gradh^j \Rm(h)| < \infty$ for all $j \in \N_0$. 
 Assume further that $g_0$  is a Riemannian metric on $M$ for which $(1-\ep_0(n)) h \leq g_0 \leq (1+\ep_0(n)) h$ holds,  and that $g_0 \in W^{1,2+2\si}_{\loc}(B_{h}(x_0,2R))$ for some $\si \in [0,\frac{1}{4})$.

  Then the solution $g(t)_{t \in (0,T(n,K_0)]}$ of the $h$-Ricci--DeTurck flow constructed in \cite{simon2002deformation} (see Theorem~\ref{C0RicciDeTurckthm} of this paper) satisfies 
  \begin{align*} 
    &\int_{B_h(y_0,r)} |\gradh g|^{2+2\si} dh + 
    \int_0^t \int_{B_h(y_0,r)} \left( |\gradh g(s)|^{4+2\si} + |\gradh g(s)|^{2\si} |\gradh^2 g(s)|^2 \right) dh \, ds \\
    &\quad \leq c(n)\int_{B_h(y_0,\ell)} |\gradh g_0|^{2+2\si} dh + c(n,r,\ell,K_0,K_1) \cdot t,\\
    &\int_{B_{h}(x_0,R)} |g(t) - g_0|^{p} \to 0
  \end{align*}
  as $t \downto 0$
  for any $p\geq 2$, for any $y_0 \in B_{h}(x_0,R)$, $0<r<\ell<1$ with $B_h(y_0,4\ell) \subseteq B_{h}(x_0,R)$, $t\leq T$.
  If we further restrict to $\si \in (0,\frac{1}{4})$, then we also have 
  \begin{align*}
    &\int_{B_{h}(x_0,R)} |\gradh g(t)-\gradh g_0|^{2+\si} \to 0 
  \end{align*}
  as $t \downto 0$.  
\end{thm}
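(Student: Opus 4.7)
The plan is in three steps: (i) a localized space--time energy estimate for $u := |\gradh g|^{2+2\sigma}$ along the flow, (ii) $L^p$-convergence of $g(t) \to g_0$ by interpolation with Theorem~\ref{main0}(i), and (iii) for $\sigma>0$, the $L^{2+\sigma}$-convergence of $\gradh g(t)$ via an approximation argument, which is the main obstacle.

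\textbf{Energy estimate.} Differentiate the $h$-Ricci--DeTurck equation once to get a parabolic system for $T := \gradh g$ of schematic form $\partial_t T = g^{ab}\gradh_a \gradh_b T + g^{-1} * T * \gradh T + g^{-3} * T^3 + \Rm(h) * g * T + \gradh\Rm(h) * g$. Take the pointwise squared $h$-norm and invoke Kato's inequality to derive, for $u = |T|^{2+2\sigma}$, a differential inequality of schematic form
\begin{equation*}
  \partial_t u \leq \Delta_h u - c_1(n,\sigma)\,|T|^{2\sigma}|\gradh T|^2 + C_1|T|^{4+2\sigma} + C_2(1+u),
\end{equation*}
with $C_1, C_2$ also depending on $K_0, K_1$. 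The restriction $\sigma<\tfrac{1}{4}$ is precisely what keeps $c_1(n,\sigma)>0$ after Kato. Multiply by $\phi^q$ for a cutoff $\phi \in C_c^\infty(B_h(y_0,\ell))$ equal to $1$ on $B_h(y_0,r)$ and $q$ sufficiently large, and integrate: integration by parts on $\Delta_h u$ produces cross-terms $\int \phi^{q-1}|\gradh\phi|\cdot |T|^{1+2\sigma}|\gradh T|$ which Young's inequality absorbs into $\int\phi^q|T|^{2\sigma}|\gradh T|^2$ at the cost of $\int \phi^{q-2}|\gradh\phi|^2 u$. The quartic $\int \phi^q|T|^{4+2\sigma}$ ends up on the left-hand side after a careful rearrangement exploiting $\gradh u = (2+2\sigma)|T|^{2\sigma}\langle T, \gradh T\rangle_h$ and using the tube bound $(1-\ep_0)h \leq g \leq (1+\ep_0)h$ to equate $\Delta_h$ and $\Delta_g$ up to absorbable error. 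Integration in $t$ and Gronwall on the linear $u$-growth yield the first claimed inequality.

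\textbf{$L^p$-convergence.} The tube bound gives $\|g(t)-g_0\|_{L^\infty(B_h(x_0,R))} \leq 2\ep_0(n)$, so the elementary interpolation $\|g(t)-g_0\|_{L^p}^p \leq \|g(t)-g_0\|_{L^\infty}^{p-2}\|g(t)-g_0\|_{L^2}^2$ combined with Theorem~\ref{main0}(i) immediately gives $\int_{B_h(x_0,R)}|g(t)-g_0|^p \to 0$ for every $p\geq 2$.

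\textbf{$W^{1,2+\sigma}$-convergence (main obstacle).} Mollify $g_0$ to smooth metrics $g_0^{(k)}$ still satisfying $(1-\ep_0(n))h \leq g_0^{(k)} \leq (1+\ep_0(n))h$ and with $g_0^{(k)} \to g_0$ in $W^{1,2+2\sigma}_\loc$, and let $g^{(k)}(t)$ denote the corresponding $h$-Ricci--DeTurck flows, which are smooth by standard short-time theory. For each fixed $k$, $\gradh g^{(k)}(t) \to \gradh g_0^{(k)}$ in $L^{2+\sigma}_\loc$ as $t\downto 0$. To compare $g$ and $g^{(k)}$, apply Theorem~\ref{main0}(ii) to both flows together with the triangle inequality through $g_0$ and $g_0^{(k)}$ to obtain $\|g(t)-g^{(k)}(t)\|_{L^2(B)} \leq C t^{1/2} + \|g_0-g_0^{(k)}\|_{L^2(B)}$, while the interior smoothing $|\gradh^2 g(t)|+|\gradh^2 g^{(k)}(t)| \leq C/t$ from Theorem~\ref{C0RicciDeTurckthm} controls the second derivatives. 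The Gagliardo--Nirenberg interpolation
\begin{equation*}
  \|\gradh f\|_{L^{2+\sigma}(B)} \leq C\,\|\gradh^2 f\|_{L^\infty(B)}^\theta\,\|f\|_{L^2(B)}^{1-\theta},
\end{equation*}
applied to $f = g(t)-g^{(k)}(t)$ with $\theta = \theta(n,\sigma)$ small (the exact value determined by scaling), combined with a diagonal choice $k = k(t)$ making $\|g_0-g_0^{(k)}\|_{L^2}$ decay fast enough in $t$ and the triangle inequality $\gradh g(t)-\gradh g_0 = [\gradh g(t)-\gradh g^{(k)}(t)]+[\gradh g^{(k)}(t)-\gradh g_0^{(k)}]+[\gradh g_0^{(k)}-\gradh g_0]$, then closes the argument.
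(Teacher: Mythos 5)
Your three-step plan is reasonable in outline, and step (ii) reproduces the paper's argument exactly (interpolate the uniform $L^\infty$ bound $|g(t)-g_0|\leq 2\ep_0(n)$ against the $L^2$ convergence). The other two steps have issues.

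\textbf{The energy estimate has a real gap.} You derive a schematic pointwise inequality for $u=|T|^{2+2\sigma}$ with $+C_1|T|^{4+2\sigma}$ on the right-hand side and only $-c_1|T|^{2\sigma}|\gradh T|^2$ as a good term, then assert that after ``a careful rearrangement'' the quartic ends up on the left. There is no such rearrangement available in your setup: $|T|^{4+2\sigma}$ is not controlled by $|T|^{2\sigma}|\gradh T|^2$, and Gronwall on the $(1+u)$-growth does not touch the higher-order quartic term. The paper generates the needed $-|\gradh g|^{4}$ sign by working not with $|\gradh g|^2$ but with $\omega=(1+L|g-h|^2)|\gradh g|^2$, choosing $L$ large: the evolution inequality for $|g-h|^2$ contains $-\tfrac{3}{2}|\gradh g|^2$ (coming from the principal part tested against $g-h$), and when this is multiplied by $L|\gradh g|^2$ it produces $-\tfrac{3}{2}L|\gradh g|^4$, which dominates the $+c(n)|\gradh g|^4$ error from the evolution of $|\gradh g|^2$ for $L\gg c(n)$. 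This is the crucial device, and your proposal has no substitute for it. Your attribution of the restriction $\sigma<1/4$ to Kato's inequality also does not reflect where that constraint actually enters.

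\textbf{The $W^{1,2+\sigma}$-convergence argument takes a genuinely different route, but with a weakness.} The paper first proves an auxiliary bound $\int_{B_r}|\gradh g(t)|^2 \leq \int_{B_\ell}|\gradh g_0|^2 + t^{v(\sigma)}C$ (obtained from the main energy estimate via a Hölder inequality in spacetime on $|\gradh g|^4$, with $v(\sigma)>0$ for any $\sigma>0$), then argues by weak compactness in $W^{1,2}$: any sequence $g(t_i)$ has a weak subsequential limit, the $L^2$ convergence identifies it as $g_0$, and the upper-semicontinuity estimate upgrades weak to strong $L^2$ convergence of $\gradh g(t)$; interpolating with the uniform $L^{2+2\sigma}$ bound gives $L^{2+\sigma}$ convergence. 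Your route instead uses the interior smoothing $|\gradh^2 g(t)|\lesssim 1/t$ and Gagliardo--Nirenberg with exponent $\theta$ satisfying $\theta < 1/3$ so that $t^{-\theta}\cdot t^{(1-\theta)/2}\to 0$. Working out the scaling gives $\theta = \frac{n\sigma + 2(2+\sigma)}{(2+\sigma)(4+n)}$, which is $<1/3$ precisely when $3n\sigma < (2+\sigma)(n-2)$: this is $\sigma<1/4$ for $n=3$ (a nice coincidence), is more permissive for $n>3$, but fails for every $\sigma>0$ when $n=2$. So even granting step (i), your diagonal argument is dimension-dependent and cannot cover all $n$. You would also need to make the diagonal choice $k=k(t)$ carefully, since the middle term $\|\gradh g^{(k(t))}(t)-\gradh g_0^{(k(t))}\|_{L^{2+\sigma}}$ requires a rate of convergence uniform in $k$; this can be salvaged by applying the same Gagliardo--Nirenberg estimate to $g^{(k)}$ (its constants depend only on the uniformly bounded $W^{1,2}$-data), but you have not said so. The paper's Hilbert-space argument sidesteps all of this and works in every dimension.
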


The last results we present here in the introduction are concerned with applications of the results mentioned above to metrics with \emph{scalar curvature bounded from below}. In the following, we denote the scalar curvature of a smooth Riemannian metric $g$ on a smooth manifold $M$ by $\Sc_{g}$. 

Recently there has been much interest in further understanding such spaces. 
See for example the books edited by Gromov--Lawson~\cite{PerSc1, PerSc2}, the surveys of Gromov~\cite{GromFour}, Sormani--Tian--Wang~\cite{tian2024oberwolfach}, Allen--Bryden--Kazaras--Perales--Sormani~\cite{allen2024}, Sormani~\cite{Sormani}, as well as the papers of Lott~\cite{LOTTSc,LOTTSc2}, Munteanu--Wang~\cite{MuntWang}, Cecchini--Hanke--Schick~\cite{CecHanSch}, Cecchini--Zeidler~\cite{CecZei}, Cecchini--Frenck--Zeidler~\cite{CecFreZei}, Bär--Hanke~\cite{BaeHan}, Lee--Tam~\cite{LeeSc}, and the references or articles in all of these books, papers, and surveys.
The list of references we have given here is merely a starting point and is by no means complete.

These spaces may arise as limits of smooth spaces with scalar curvature bounded from below by a fixed constant; or in some cases one can consider scalar curvature bounds from below of non-smooth metric spaces, if the metric space is regular enough.
For example, one can define curvature bounds from above or below of Riemannian manifolds $(M,g)$ \emph{in a distributional sense} if the metric $g$ is in $W^{1,2}_{\loc}(M,h)$ for some smooth Riemannian metric $h$ and satisfies $\frac{1}{c} h \leq g \leq c h$: see Definition~\ref{LeeLeFloch}.
In both cases the Ricci flow and the $h$-Ricci--DeTurck flow (see Section~\ref{RDeTurckRicci}) have proved a useful tool for examining and obtaining more information about such spaces. 
See for example Jiang--Sheng--Zhang~\cite{jiang2023weak}, Burkhardt-Guim~\cite{burkhardt-guim2024smoothing, Burk-Guim1, Burk-Guim2}, Bamler~\cite{bamlerricciflowproof}, Lamm--Simon~\cite{lamm2023ricci}, Chu--Lee~\cite{ChuLee}, Chu--Lee--Zhu~\cite{chuleezhu}, Lee--Liu~\cite{leeliu}, Lott \cite{LOTTSc}. 
In the distributional setting, we obtain the following result.

\begin{thm}
  \label{main2}
  For any $n\in \N$, $\si \in (0,\frac{1}{4})$ there exists $\ep_1(\si,n) > 0$ such that the following holds.
  Let $(M^n,h)$ be a smooth, connected, closed $n$-dimensional Riemannian manifold and let $g_0 \in W^{1,2+2\si}(M)$ be such that
  \begin{equation*}
    |g_0-h| \leq \ep_1(\si,n).
  \end{equation*}
  Assume further, for some $b \in \R$, that the scalar curvature of $g_0$ is not less than $b$ in the distributional sense, which we write as  $\Sc_{g_0} +b \geq 0$ (see Definition~\ref{LeeLeFloch}).

  Then the solution $g(t)_{t\in (0,T(n,K_0)]}$ of the $h$-Ricci--DeTurck flow constructed in \cite{simon2002deformation} (see Theorem~\ref{C0RicciDeTurckthm} of this paper) satisfies $g(t) \to g_0$ in $W^{1,2+\si}(M)$ as $t \downto 0$ and $\Sc_{g(t)} +b \geq 0$ (respectively $\Sc_{\ell(t)} + b \geq 0$ for a so-called \emph{related Ricci flow solution} $\ell(t)$, see Section~\ref{RDeTurckRicci}) for all $t \in (0,T)$ in the smooth sense.
\end{thm}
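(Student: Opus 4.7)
My plan is to split the theorem into two essentially independent assertions. The convergence $g(t)\to g_0$ in $W^{1,2+\sigma}(M)$ comes almost for free from Theorem \ref{main1}: since $M$ is closed I would cover it by finitely many balls $B_h(x_i,R_i)$ satisfying the hypotheses of Theorem \ref{main1}, and the final conclusion of that theorem (valid precisely because $\sigma>0$) then gives the global $W^{1,2+\sigma}$-convergence on $M$. For the scalar curvature statement I would pass to the related Ricci flow $\ell(t)$ of Section \ref{RDeTurckRicci}, which is obtained from $g(t)$ by a smooth one-parameter family of diffeomorphisms $\Phi_t$; since scalar curvature is diffeomorphism-invariant, $\Sc_{g(t)}+b\geq 0$ is equivalent to $\Sc_{\ell(t)}+b\geq 0$.

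On the Ricci flow side the smooth function $u(t,x):=\Sc_{\ell(t)}(x)+b$ satisfies
\begin{equation*}
  \partial_t u = \Delta_{\ell(t)} u + 2\,|\Ric(\ell(t))|^2 \geq \Delta_{\ell(t)} u \qquad \text{on } M\times(0,T).
\end{equation*}
Letting $\Gamma(x,y;t,s)$ for $0<s<t\leq T$ denote the nonnegative fundamental solution of $\partial_t w = \Delta_{\ell(t)} w$ on the closed manifold $M$, the comparison principle for supersolutions yields
\begin{equation*}
  u(t,x) \geq \int_M \Gamma(x,y;t,s)\,u(s,y)\,d\mu_{\ell(s)}(y) \qquad \text{for every }0<s<t.
\end{equation*}
I would then take $s\downto 0$ in this inequality. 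The Lee--LeFloch formula of Definition \ref{LeeLeFloch} writes $\langle\Sc_g,\phi\rangle$ as an integral polynomial in $g$, $g^{-1}$, $\nabla g$, $\phi$ and $\nabla \phi$, from which one reads off continuity of the functional $g\mapsto\langle\Sc_g,\phi\rangle$ on the portion of $W^{1,2+\sigma}(M)$ close to $h$ in $C^0$, for any fixed $\phi\in C^\infty(M)$. Combined with the first part of the proof and the identity $\langle\Sc_{g(s)},\phi\rangle=\int\phi\,\Sc_{g(s)}\,d\mu_{g(s)}$ valid for the smooth $g(s)$, this gives $\int \phi(\Sc_{g(s)}+b)\,d\mu_{g(s)} \to \langle\Sc_{g_0}+b,\phi\rangle\geq 0$ for every nonnegative smooth $\phi$; pulling back by $\Phi_s\to\mathrm{id}$ transfers the statement to the $\ell$-side. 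Applying it with $\phi=\Gamma(x,\cdot;t,s)$, which stays smooth and nonnegative and converges in $C^\infty(M)$ as $s\downto 0$ (since the Ricci flow $\ell(\tau)$ is smooth on $[\delta,t]$ for any $0<\delta<t$), the right-hand side of the comparison inequality has a nonnegative limit, so $u(t,x)\geq 0$ as required.

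The hard part will be the bookkeeping in this limit $s\downto 0$: I need to simultaneously control the behaviour of $\Gamma(x,\cdot;t,s)$ as $s\downto 0$ (ultimately standard since $t$ is fixed well away from $s$), the pullback of the test function under the diffeomorphisms $\Phi_s$ (which requires $\Phi_s\to\mathrm{id}$ in a topology strong enough that composition preserves the $C^1$-regularity needed by the Lee--LeFloch formula), and the $\sigma$-loss from the $W^{1,2+2\sigma}$-hypothesis on $g_0$ to the $W^{1,2+\sigma}$-convergence provided by Theorem \ref{main1}, which must be verified to still suffice for distributional continuity of $\Sc_g$.
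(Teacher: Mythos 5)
Your high-level strategy --- pass to the related Ricci flow, use the heat-kernel/conjugate-heat-kernel representation of $u = \Sc_{\ell}+b$, send the initial time to $0$, and invoke continuity of the Lee--LeFloch functional --- is exactly the skeleton of the paper's argument (following Jiang--Sheng--Zhang), so in that sense the route is the same. The paper phrases it equivalently: with $\phi$ solving the conjugate heat equation one gets $\partial_t\int_M (\Sc_{\ell(t)}+b)\phi(\cdot,t)\,d\ell(t) = 2\int_M \phi\,|\Rc_{\ell(t)}|^2\,d\ell(t)\geq 0$, and one must show the limit as $t\downto 0$ is $\geq 0$.

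The gap is your assertion that $\Gamma(x,\cdot;t,s)$ ``stays smooth and nonnegative and converges in $C^\infty(M)$ as $s\downto 0$ (since the Ricci flow $\ell(\tau)$ is smooth on $[\delta,t]$)''. This is not justified, and it is precisely where the analytic work lives. The function $\Gamma(x,\cdot;t,s)$ solves a \emph{backward} (conjugate) heat equation on $[s,t]$ against a Ricci flow background whose curvature blows up like $1/\tau$ as $\tau\downto 0$; smoothness of $\ell(\tau)$ on each $[\delta,t]$ gives no uniformity as $\delta\downto 0$, so there is no reason for $C^\infty$, or even $C^0$, control on $\Gamma(x,\cdot;t,s)$ uniformly in $s$. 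Indeed the paper does not prove, and does not need, $C^\infty$ convergence: it proves (Theorem~\ref{conjugatethm}) only the much weaker bounds $\phi(\cdot,s)\leq c\,s^{-\ep}$ and $\int_M \phi^p\,d\ell(s)\leq C$ for a carefully chosen $p=p(\sigma)$ --- using the curvature lower bound $\Sc_{\ell}\geq -\ep/t$ and the integrability $\int_0^T\int_M|\Sc^-_\ell|^2\,d\ell\,dt<\infty$ supplied by Theorem~\ref{W1siDeturckthm}. Beyond that it must separately control $\int_M|\nabla^{\ell}\phi|^2\,d\ell(s)$, via the functionals $F$ and $E$ and the interpolation Lemma~\ref{gradientlemma}, because the Lee--LeFloch pairing involves $\nabla$ of the test function. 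Finally, the limit step is \emph{not} mere continuity of $g\mapsto\langle \Sc_g,\phi\rangle$ for a fixed smooth $\phi$: here both $g(t)$ and the test function $\hat\phi(\cdot,t)$ depend on $t$, and the test function is controlled only in $L^{p(\sigma)}$ and $W^{1,2}$. The paper spends a full page of Hölder estimates with precisely tuned exponents (hence the restriction $\sigma\in(0,\tfrac14)$, $\ep_1\leq\min(\sigma^3,\ep_0)$) to show $\int_M(\Sc_{\ell(t)}+b)\phi(\cdot,t)\,d\ell(t)\to $ a nonnegative limit. Your proposal treats all of this as ``bookkeeping''; it is actually the heart of the theorem and cannot be bypassed by asserting smooth convergence of the kernel.

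(Your first paragraph --- covering $M$ by finitely many balls and applying Theorem~\ref{main1} to get $W^{1,2+\sigma}(M)$-convergence --- is fine and matches the paper.)
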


We note that, if $n \geq 8$, one cannot improve this result to the case that 
\begin{align*}
  \frac{1}{h} \leq g_0 \leq c h
\end{align*}
for some $c >1$, as we now explain.
The examples of Cecchini--Frenck--Zeidler in Theorem~A of \cite{CecFreZei} show that for any $n \geq 8$ there exists $g_0$ with $g_0 \in W^{1,3}(M,h)$, $\frac{1}{h}\leq g_0 \leq ch$ (we call such a metric an \emph{$L^{\infty}$-metric}) whose distributional scalar curvature is not less than $0$, but there are no smooth metrics $g$ on $M$ with $\Sc_g \geq 0$. Whether there is a Ricci--DeTurck flow starting at such a metric $g_0$ is to the best of our knowledge unknown. However, if such a solution did exist, then the examples in Theorem~A of \cite{CecFreZei} show that it cannot have scalar curvature bigger than or equal to $0$ for $t>0$.

As far as we know, the situation in dimension $n<8$ is still unclear. If  $g_0 \in W^{1,2+2\si}(M^n,h)$, $\frac{1}{h} \leq g_0 \leq ch$ for some $\si \in (0,\frac{1}{4})$, $c \in [1,\infty)$, and $g_0$ has distributional scalar curvature not less than $0$, then it is unknown for $n < 8$, (a) whether there exists a metric $g$ with $\Sc_g \geq 0$, or (b) whether there exists a Ricci--DeTurck flow starting from $g_0$, or (c) whether there exists a Ricci--DeTurck flow $g(t)_{t \in (0,T]}$ starting from $g_0$ with $\Sc_{g(t)} \geq 0$ in the smooth sense for $t>0$.

To the best of our knowledge, the following case is also  open:
Assume $g_0 \in W^{1,2}(M,h)$, where $(M,h)$ is a smooth closed Riemannian manifold, and $ (1-\ep_0(n)) h \leq g_0 \leq (1+ \ep_0(n))h$, or even $g_0$ continuous, and $g_0$ has scalar curvature larger than or equal to $b$ in the distributional sense (also the case $b\geq 0$ is interesting). We do not know if there exists a smooth metric $g$ on $M$ with $\Sc_g\geq 0$. Whether the smooth solution $(M,g(t))_{t \in (0,T)}$ of the $h$-Ricci--DeTurck flow with initial value $g_0$ coming from Theorem~\ref{main1} above satisfies $\Sc_{g(t)} \geq 0$ is also unknown to us.

\subsection*{Acknowledgements}
We thank Gianmichele Di Matteo for useful initial discussions on the $\sigma=0$ case of Theorem~\ref{W1siRicciDeTurck}.
Both authors were supported by the Special Priority Program SPP 2026 \enquote{Geometry at Infinity} of the German Research Foundation (DFG).

\section{Ricci--DeTurck flow and Ricci flow}
\label{RDeTurckRicci}
 
Writing the Ricci flow equation for $(M^n,\ell(t))_{t \in [0,S)}$ in local coordinates, one obtains \cite{hamilton1982threemanifolds}
\begin{align*}
  \label{eq:RF}
  \tag{RF}
  \partt \ell_{jk}(x,t)  &= - 2 \Rc(\ell)_{jk}(x,t) \\
  &= \ell^{pi}(\partial^2_{pi}  \ell_{jk} -\partial^2_{ik}\ell_{pj} -\partial^2_{pj}\ell_{ik} + \partial^2_{jk}\ell_{pi}) + \dots,
\end{align*}
where the dots refer to lower order terms.
Hamilton \cite{hamilton1982threemanifolds} showed that this system is not \emph{strictly} parabolic. That is, the symbol of this equation has some zero eigenvalues resulting from the fact that the equation is invariant under diffeomorphisms: Hamilton explained in \cite{hamilton1982threemanifolds} that if $\ell(t)$ is a solution of \eqref{eq:RF} and $\phi: N \to M$ a diffeomorphism, then $\phi^{*} \ell(t)$ is also a solution of the Ricci flow. Hence well established methods from partial differential equations for \emph{strictly} parabolic systems used to show existence, uniqueness, or smoothing properties are not immediately available.

Initially, Hamilton used the Nash--Moser inverse function theorem in \cite{hamilton1982threemanifolds} to prove existence of a solution. 
Later, DeTurck~\cite{deturck1983deforming} introduced the \emph{Ricci--DeTurck flow}, which is a strictly parabolic system. It can be used to give an alternative proof of the existence of a solution to the Ricci flow using more standard methods from parabolic equations.

We say that a smooth family $(M,g(t))_{t \in I}$ of Riemannian manifolds, for $I \subset \R$ an interval, is a \emph{solution of the $h$-Ricci--DeTurck flow}, or just \emph{Ricci--DeTurck flow} (if it is clear what the \emph{background metric} $h$ is), if $(M,h)$ is a smooth Riemannian manifold and 
\begin{align*}
  \label{Meq}
  \tag{RDTF}
  \partt {g}_{ij} = &~ {g}^{ab} \, \gradh_a \gradh_b {g}_{ij}
  - {g}^{kl} {g}_{ip} h^{pq} \Rm_{jkql}(h)
  - {g}^{kl} {g}_{jp} h^{pq} \Rm_{ikql}(h) \\
  &~ +\tfrac{1}{2} {g}^{ab} {g}^{pq} \left(\gradh_i {g}_{pa} \gradh_j {g}_{qb}
    + 2 \gradh_a {g}_{jp} \gradh_q {g}_{ib} - 2 \gradh_a {g}_{jp}
    \gradh_b {g}_{iq} \right. \\
  &~\left. \qqqqquad - 2 \gradh_j {g}_{pa} \gradh_b {g}_{iq}
    - 2 \gradh_i {g}_{pa} \gradh_b {g}_{jq} \right),
\end{align*}
in the smooth sense on $M\times I$ (see \cite{deturck1983deforming,shi1989deforming,hamilton1993formation,simon2002deformation}). 
This equation is now parabolic. DeTurck~\cite{deturck1983deforming} and Shi~\cite{shi1989deforming} considered the case that $h=g_0$ and $g_0$ is smooth. Hamilton~\cite{hamilton1993formation} explained that one can consider $g_0$ and $h$ smooth, but $h \neq g_0$. In \cite{simon2002deformation}, Simon considered the setting that $h$ is smooth, but $h$ is not necessarily equal to $g_0$, and $g_0$ is continuous or \emph{almost continuous} (see Definition~\ref{def-fair-controlled} below).
 
In many settings, for example when $M$ is closed, there is \emph{always} a Ricci flow solution $(M,\ell(t))_{t \in I}$ related to $(M,g(t))_{t \in I}$.
To obtain such a solution, one proceeds as follows: For a given $S \in I$ let $\Phi(t): M \to M$, $t \in I$ be the smooth family of diffeomorphisms with $\Phi(S) = \mathrm{Id}$ satisfying
\begin{equation}
  \label{ODEDe}
  \begin{split}
    \partt {\Phi}^{\al}(x,t) &= V^{\al}(\Phi(x,t),t), \quad \text{ for all } (x,t)\in M \times I, \\
      \Phi(x,S) &= x,
  \end{split}
\end{equation}
where
\begin{equation}
  {V}^{\al}(y,t) := -{g}^{\be \ga} \left( {\Gamma(g)}^{\al}_{\be \ga} - {\Gamma(h)}^{\al}_{\be \ga} \right) (y,t). \label{defnV}
\end{equation}
These solutions always exist when $M$ is closed due to the theory of ordinary differential equations (see for example
 \cite[Theorem~9.4.8]{Lee}).
The family $\ell(t) := (\Phi(t))^*g(t)_{t \in I}$ is then a smooth solution of the Ricci flow. We call any Ricci flow solution $\ell(t)$ a \emph{related Ricci flow solution to $g(t)$} if $\ell(t) = (\Phi(t))^*g(t)_{t \in I}$, where $\Phi$ solves \eqref{ODEDe} for some $S \in I$.

\section{Ricci--DeTurck flow of almost continuous \texorpdfstring{$L^2_{\loc}$}{L\textasciicircum{}2\textunderscore{}loc}-metrics}

For simplicity we sometimes write \eqref{Meq} as 
\begin{align}
  \label{RDT}
  \partt g_{ij} 
    = &~ g^{ab} \big( \gradh_a \gradh_b g_{ij} \big) + \big( h^{-1} * g^{-1} * g * \Rm(h) \big)_{ij} \\
    \nonumber &{} + \big( g^{-1} * g^{-1} * \gradh g * \gradh g \big)_{ij}.
\end{align}
 
In the paper \cite{simon2002deformation}, Simon considered the Ricci--DeTurck flow of given metrics $g_0$ which are continuous or \emph{almost continuous} in the following sense.

\begin{defi}
  \label{def-fair-controlled}
  Let $M$ be a smooth manifold without boundary and let $g_0$ be a Riemannian metric on $M$.
  \begin{enumerate}[label=(\roman*)]
    \item We say that $g_0$ is \emph{$\ep$-almost continuous} if there exists a smooth metric $h$ so that $(M,h)$ is complete with $\sup_M |\Riem(h)| < \infty$ and $(1-\ep) h \leq g_0 \leq (1+\ep) h$.
    \item We say that $g_0$ is \emph{almost continuous} if it is $\ep_0(n)$-almost continuous where $\ep_0(n)>0$ is the (small) constant appearing in Theorem~\ref{C0RicciDeTurckthm} (below). 
  \end{enumerate}
 \end{defi}

\begin{rem}
  From the estimates of Shi~\cite[Theorem~1.1]{shi1989deforming}, we see that if we are given $g_0$ and $h$ as in case (i) of Definition~\ref{def-fair-controlled}, with $\big( 1-\frac{\ep_0(n)}{2} \big) h \leq g_0 \leq \big( 1+\frac{\ep_0(n)}{2} \big) h$, then we may slightly deform $h$ to $\ti h$ so that $\sup_M |^{\ti h} \nabla^j \Riem(\ti h)| < \infty$, $j \in \N_0$, and $(1-\ep_0(n)) \ti h \leq g_0 \leq (1+\ep_0(n)) \ti h$ holds.
\end{rem}

\begin{rem}
  As we always consider $(M,h)$ with $\sup_M |\Rm(h)| = K_0 < \infty$, we see using the Gromov volume monotonicity property that balls $B_h(x_0,r)$ with $r \in (0,1)$ always satisfy $\vol(B_h(x_0,r))\leq c(n,K_0)$. We use this fact freely throughout the paper, without further comment.
\end{rem}

\begin{thm}[{Simon~\cite[Theorem~5.2]{simon2002deformation}}]
  \label{C0RicciDeTurckthm}
  For any $n \in \N$ there exists $\ep_0(n)>0$ such that the following holds.
  Let $(M^n,h)$ be a smooth, connected, complete $n$-dimensional Riemannian manifold such that $K_j := \sup_M |\gradh^j \Rm(h)| < \infty$ for all $j \in \N_0$.
  Assume further that $g_0$ is a Riemannian metric on $M$, for which $(1-\ep) h \leq g_0 \leq (1+\ep)h$ holds, where $0 < \ep \leq \ep_0(n)$.
  
  Then there exists $T=T(n,\ep,K_0) > 0$, monotone increasing in the second variable and decreasing in the third variable, and a smooth complete solution $(M,g(t))_{t \in (0,T]}$ of \eqref{Meq} such that $\sup_{K} |g(t)-g_0|_{h} \to 0$ as $t \downto 0$ for all sets $K \cc U$, for all sets $U$ where $g_0|_U$ is continuous, and furthermore
  \begin{align*}
   &(1 - 2 \ep) h \leq g(t) \leq (1 + 2 \ep) h, \\
    &|\gradh^j g(t)|^2 \leq \frac{c_j(n,h)}{t^j}
  \end{align*}
  for all $t \in (0,T)$ and $j \in \N_0$, on all of $M$. If $g_0$ is smooth, then (by Simon~\cite[Lemma~5.1]{simon2002deformation}) the solution may be smoothly extended to $(M,g(t))_{t \in [0,T]}$.
  Here $c_j(n,h) = a(j,n,\sup_M |\Riem(h)|,\ldots$, $\sup_M |\gradh^j \Riem(h)|)$ and $a(j,n,k_0,k_1, \ldots,k_j)$ is monotone increasing in each of the variables $k_0,k_1,\ldots,k_j$ for $j \in \N_0$, $n \in \N$.
\end{thm}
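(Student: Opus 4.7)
The plan is to obtain the solution as a limit of smooth Ricci--DeTurck flows with smoothed initial data, obtaining the existence time and the pointwise bounds uniformly. First, I would fix a smooth reference metric $h$ as in the statement and mollify $g_0$ with respect to $h$ (for instance, using a partition of unity subordinate to harmonic coordinate charts for $h$, mollifying the component functions, then gluing and averaging), producing smooth Riemannian metrics $g_0^\varepsilon$ on $M$ with $(1-2\varepsilon_0(n))h \leq g_0^\varepsilon \leq (1+2\varepsilon_0(n))h$ (after shrinking the defining constant slightly) and with $g_0^\varepsilon \to g_0$ in $L^\infty_{\mathrm{loc}}$ on any open set where $g_0$ is continuous. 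Because \eqref{Meq} is strictly parabolic for $g$ close to $h$, and because $h$ has bounded geometry, the DeTurck/Shi theory provides, for each $\varepsilon$, a smooth solution $g^\varepsilon(t)_{t \in [0,T_\varepsilon]}$ of the $h$-Ricci--DeTurck flow starting at $g_0^\varepsilon$.

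The heart of the argument is to derive two a priori estimates that are uniform in $\varepsilon$. The \emph{pinching preservation} $(1-2\varepsilon_0)h \leq g^\varepsilon(t) \leq (1+2\varepsilon_0)h$ follows from a maximum principle applied to the evolution of $|g^\varepsilon - h|^2_h$ (or, more carefully, of $\mathrm{tr}_h g^\varepsilon$ and $\mathrm{tr}_{g^\varepsilon} h$): using \eqref{RDT}, one finds that the zero-order contribution of the $\Rm(h)$-terms and the gradient terms is, at a maximum of $|g^\varepsilon - h|^2_h$, bounded by $C(n) K_0$ times $|g^\varepsilon - h|_h$, so that the pinching can deteriorate by at most $C(n, K_0)\, t$ and survives until some $T = T(n, \varepsilon_0, K_0)$. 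Once the pinching is controlled, \eqref{Meq} is uniformly parabolic on $[0,T]$ and one iterates Bernstein-type/Shi-type estimates: compute $\partial_t |\gradh^j g^\varepsilon|^2$ using \eqref{RDT}, absorb the bad terms into the good diffusion using products of the form $t^j |\gradh^j g^\varepsilon|^2 + C t^{j-1} |\gradh^{j-1} g^\varepsilon|^2 + \dots$, and apply the maximum principle to obtain the interior smoothing bounds $|\gradh^j g^\varepsilon(t)|^2 \leq c_j(n,h)/t^j$, with $c_j$ depending monotonically on $K_0,\dots,K_j$.

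With these uniform bounds in hand, I would pass to a subsequential limit $g(t) = \lim_\varepsilon g^\varepsilon(t)$ via Arzelà--Ascoli on compact subsets of $M \times (0,T]$; the uniform smoothing estimates ensure that the limit is smooth and itself solves \eqref{Meq} on $M \times (0,T]$, while the pinching and the $c_j/t^j$ bounds pass to the limit. Completeness of $g(t)$ follows from the two-sided bound against the complete metric $h$. The initial value statement $\sup_K |g(t) - g_0|_h \to 0$ on compacta $K$ inside an open set $U$ of continuity of $g_0$ should come from combining the uniform convergence $g_0^\varepsilon \to g_0$ on $K$ with a uniform-in-$\varepsilon$ local continuity estimate for $g^\varepsilon(t)$ at $t=0$; the latter is obtained by integrating the pointwise bound $|\partial_t g^\varepsilon|_h \leq |\gradh^2 g^\varepsilon|_h + C(n,K_0)(1 + |\gradh g^\varepsilon|_h^2) \leq \eta(\varepsilon) + C/\sqrt{t}$ after one bootstraps a $t$-independent bound on $|\gradh g^\varepsilon|$ on slightly shrunken continuity sets. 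The main obstacle I anticipate is precisely this last point: propagating the continuity of $g_0$ on $U$ into a quantitative modulus for $g^\varepsilon(t)$ that survives the limit $\varepsilon \to 0$ without losing the rough control on $\partial U$; one handles it by localizing the Bernstein estimate with a cutoff in $h$-distance, paying an extra $C(n,K_0,\mathrm{dist}_h(K,\partial U))$ but preserving the $1/\sqrt{t}$-integrable blow-up needed for attainment of the initial data.
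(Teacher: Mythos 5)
This theorem is imported from Simon~\cite[Theorem~5.2]{simon2002deformation} and is not proved in the present paper, so there is no in-paper proof to compare against; one can only check whether your outline matches the known strategy from that reference. It does, in broad strokes: mollify $g_0$ against $h$, solve \eqref{Meq} for each smoothed datum using Shi's local existence for complete bounded-geometry $h$, derive uniform-in-$\varepsilon$ pinching preservation via a maximum principle on $\mathrm{tr}_h g$, $\mathrm{tr}_g h$ (or $|g-h|_h^2$), obtain uniform interior Bernstein/Shi-type bounds $|\gradh^j g|^2 \le c_j/t^j$, and pass to a subsequential smooth limit on $M\times(0,T]$.

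There is, however, a genuine gap in the final step (attainment of the $C^0$ initial data on continuity sets). You propose to integrate $|\partial_t g^\varepsilon|_h$ in time, claiming a bound of the form $C/\sqrt{t}$, and justify this by ``bootstrapping a $t$-independent bound on $|\gradh g^\varepsilon|$ on slightly shrunken continuity sets.'' But the theorem only assumes $g_0$ is \emph{continuous} on $U$, not $C^1$, and the uniform estimates you derive give $|\gradh g^\varepsilon| \le c_1/\sqrt{t}$ and $|\gradh^2 g^\varepsilon| \le c_2/t$. Thus $|\partial_t g^\varepsilon| \le C/t$, which is \emph{not} integrable at $t=0$, and the time-integration argument fails. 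The $t$-independent gradient bound you invoke requires $g_0 \in C^1_{\mathrm{loc}}$ (this is exactly the content of the Remark following the theorem in the present paper, which appeals to Simon's Lipschitz/$C^1$ result). For merely continuous $g_0$, the correct argument must exploit the \emph{modulus of continuity} of $g_0$ directly, for instance via a local barrier or maximum-principle comparison showing $\sup_{B_h(x_0,r)} |g^\varepsilon(t) - g_0^\varepsilon| \le \operatorname{osc}_{B_h(x_0,2r)} g_0^\varepsilon + C(n,K_0,r)\sqrt{t}$, uniformly in $\varepsilon$, and then using that the oscillation term can be made small on compacta in $U$ by shrinking $r$. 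That is the kind of estimate that passes to the limit and yields $\sup_K |g(t)-g_0|_h \to 0$; time-integration of $\partial_t g$ does not.
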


\begin{rem}
If the initial metric $g_0$ is in $C^1_{\loc}(M)$ then the covariant derivative of $g_0$ remains uniformly (in time) bounded on compact subsets of $M$. The same is true for higher order derivatives: see the proof of Lemma~2.1 in Simon \cite{simonLipschitz}, where the $C^1_{\loc}$ case is explained explicitly. The higher order cases follow as in Shi \cite[Lemma~4.2]{shi1989deforming}.
\end{rem}
 
In this paper we are interested in metrics which are $\ep$-almost continuous for $\ep \leq \ep_0(n)$ and in $L^{2}_{\loc}$.
Using the estimates of \cite{simon2002deformation} and the evolution equation for the metric, we obtain the following.

\begin{thm}
  \label{L2lemmas}
  For any $n \in \N$ there exists $\ep_0(n) > 0$ such that the following holds.
  Let $(M^n,h)$ be a smooth, connected $n$-dimensional Riemannian manifold (not necessarily complete) such that $K_j := \sup_M |\gradh^j \Rm(h)| < \infty$ for all $j \in \N_0$, $0 < r < \ell < \infty$ and $B_{h}(x_0,4\ell) \cc M$.

  \begin{enumerate}[label=(\roman*)]
    \item Assume that $g(t)_{t \in (0,T)}$, $T \leq 1$ is a smooth solution of \eqref{Meq} in $B_h(x_0,4r)$ and
      \begin{equation*}
        |g(t)-h| \leq \ep_0(n) \quad \text{ in } B_h(x_0,4r).
      \end{equation*}
      Then
      \begin{equation}
        \label{initgradeest}
        |\gradh^j g(t)|^2 \leq \frac{c_j(n,h,r)}{t^j} \quad \text{ in } B_h(x_0,r)
      \end{equation}
      for all $t \in (0,T)$.
    \item Let $g(i)(t)_{t \in [0,T]}$, $T \leq 1$ be smooth solutions of \eqref{Meq} in $B_h(x_0,4r)$ for all $i \in \N$ with 
      \begin{equation*}
        |g(i)(t)-h| \leq \ep_0(n) \quad \text{ in } B_h(x_0,4r) \text{ for all  } t \in [0,T]
      \end{equation*}
      and assume that there exists $g_0 \in L^2(B_h(x_0,4r))$ such that $g(i)(0) \to g_0$ as $i\to \infty$ in $L^2(B_h(x_0,4r))$. Then, after taking a subsequence,  $g(i) \to g$ as $i\to \infty$ smoothly on compact subsets of $B_h(x_0,4r) \times (0,T)$, $g$ solves \eqref{Meq} for all $t \in (0,T)$ and $g$ satisfies
      \begin{equation}
        \label{firstweakgradientconv}
        \int_{B_h(x_0,r)} |g(t)-g_0|^2 dh + \int_{0}^t \int_{B_h(x_0,r)} |\gradh g (s)|^2 dh \, ds \to 0
      \end{equation}
      as $t \downto 0$.
    \item Assume that $g(t)_{t \in [0,T)}$, $T \leq 1$ is a smooth solution of \eqref{Meq} in $B_h(x_0,4\ell)$ with 
      \begin{equation*}
        |g(t)-h| \leq \ep_0(n)
      \end{equation*}
      where $\ep_0(n)$ is sufficiently small.  
      Then we have
      \begin{align}
        &\int_{B_h(x_0,r)} |g(t)-g(0)|^2 dh + \int_0^t \int_{B_h(x_0,r)} |\gradh g(s)|^2 dh \, ds \\
        \nonumber &\quad \leq t \cdot c(n,r,\ell,K_0) \left( 1 + \int_{B_h(x_0,\ell)} |\gradh g(0)|^2 \right)
      \end{align}
      for all $t \in [0,T)$.
    \item Let $M^n$, $g_0$, $h$ be  as in Theorem~\ref{C0RicciDeTurckthm}, and assume further that  $g_0 \in L^2_{\loc}(M)$. Then the solution $g(t)_{t\in (0,T(n,K_0)]}$ of \eqref{Meq} constructed in Theorem~\ref{C0RicciDeTurckthm} satisfies
      \begin{equation}
        \label{firstweakgradientest}
        \int_{B_h(x_0,r)} |g(t)-g_0|^2 dh + \int_{0}^t \int_{B_h(x_0,r)} |\gradh g (s)|^2 dh \, ds \to 0
      \end{equation}
      as $t \downto 0$ for all $x_0 \in M$ and $r > 0$.
    \item Let $M^n$, $g_0$, $h$ be  as in Theorem~\ref{C0RicciDeTurckthm}, and assume further that     $g_0 \in W^{1,2}_{\loc}(M)$.  Then the solution $g(t)_{t\in (0,T(n,K_0)]}$  of \eqref{Meq} constructed in Theorem~\ref{C0RicciDeTurckthm} satisfies
    \begin{align}
      &\int_{B_h(x_0,r)} |g(t)-g_0|^2 dh + \int_0^t \int_{B_h(x_0,r)} |\gradh g(s)|^2 dh \, ds \\
      \nonumber &\quad \leq t \cdot c(n,r,\ell,K_0) \left( 1 + \int_{B_h(x_0,\ell)} |\gradh g_0|^2 \right) 
    \end{align}
    for all $x_0 \in M$, all $0<r<\ell$, and all $t \in (0,T]$.
  \end{enumerate}
\end{thm}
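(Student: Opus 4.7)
The plan is to establish the five parts in order: (i) and (iii) carry the main analytic content, while (ii), (iv), (v) follow from them by compactness and approximation.

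For (i), I would localize the global argument behind Theorem~\ref{C0RicciDeTurckthm}. The condition $|g-h|\leq \ep_0$ keeps \eqref{Meq} uniformly parabolic with coefficients close to $h^{-1}$, and the Bochner-type evolution of $|\gradh^j g|^2$ under \eqref{Meq} produces a good term $-2|\gradh^{j+1} g|_g^2$ that dominates the lower-order errors for $\ep_0$ sufficiently small. Applying the parabolic maximum principle to $t^j\eta^{2(j+1)}|\gradh^j g|^2$, where $\eta$ is a standard smooth cutoff of $B_h(x_0,4r)$ with $\eta\equiv 1$ on $B_h(x_0,r)$, and inducting on $j$, yields \eqref{initgradeest}. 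This is the usual Shi--Simon-type localization of the global gradient estimate.

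The heart of the theorem is (iii). Fix a smooth cutoff $\eta \in C_c^\infty(B_h(x_0,\ell))$ with $\eta\equiv 1$ on $B_h(x_0,r)$ and $|\gradh\eta|\leq C/(\ell-r)$, and let $u := g(t)-g(0)$, so $|u|\leq 2\ep_0$ and $u(0)=0$. Differentiating $\tfrac12\int\eta^2 |u|^2\,dh$ in time and integrating by parts against the principal term $g^{ab}\gradh_a\gradh_b g$ of \eqref{RDT}, while splitting $\gradh g = \gradh u + \gradh g(0)$, produces a good term $\leq -c_1 \int\eta^2|\gradh g|^2$ together with a cross term dominated by Young's inequality as $\tfrac{c_1}{4}\int\eta^2|\gradh g|^2 + C\int\eta^2|\gradh g(0)|^2$, and several lower-order errors. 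Using $|u|\leq 2\ep_0$, $|\gradh g^{-1}|\leq C|\gradh g|$, and $|F|\leq C|\gradh g|^2+CK_0$ for the lower-order part $F$ of \eqref{RDT}, the remaining errors are bounded by $C\ep_0\int\eta^2|\gradh g|^2$ (absorbed into the good term for $\ep_0$ small) plus a constant $C(n,r,\ell,K_0)$. After absorption,
\[
 \partial_t \tfrac12\int\eta^2 |u|^2\,dh + \tfrac{c_1}{8}\int\eta^2|\gradh g|^2\,dh \leq C(n,r,\ell,K_0)\Bigl(1+\int\eta^2|\gradh g(0)|^2\,dh\Bigr),
\]
and integrating in $t$ with $u(0)=0$, then using $\eta\equiv 1$ on $B_h(x_0,r)$ and $\mathrm{supp}\,\eta\subset B_h(x_0,\ell)$, gives the claim.

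The remaining parts follow. For (ii), (i) provides uniform interior smooth estimates on compact subsets of $B_h(x_0,4r)\times(0,T)$, yielding, via a diagonal subsequence, a smooth limit $g$ solving \eqref{Meq}. The $L^2$-initial convergence is obtained by running the energy estimate of (iii) with $g(i)-h$ in place of $g(i)-g(i)(0)$ (exploiting $\gradh h=0$ so no $\gradh g(i)(0)$ appears), which gives $\int_0^t\int \eta^2|\gradh g(i)|^2 \leq C$ uniformly in $i$, and then combining the triangle inequality $|g(t)-g_0|\leq |g(t)-g(i)(t)| + |g(i)(t)-g(i)(0)| + |g(i)(0)-g_0|$ with a Cauchy-in-$L^2$ argument for $g(i)-g(j)$ (which satisfies a linear parabolic system with coefficients close to $h$) to control the middle contribution for small $t$. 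For (iv), approximate $g_0\in L^2_{\loc}$ by smooth metrics $g_{0,j}\to g_0$ in $L^2_{\loc}$ with $|g_{0,j}-h|\leq 2\ep_0$, flow via Theorem~\ref{C0RicciDeTurckthm} to smooth-up-to-$t=0$ solutions $g_j$, apply (ii) to get a subsequential limit achieving $g_0$ in $L^2_{\loc}$, and identify this limit with the specific $g(t)$ of Theorem~\ref{C0RicciDeTurckthm} via parabolic uniqueness in the class $|g-h|\leq 2\ep_0$. Finally, (v) follows by the same approximation scheme but in $W^{1,2}_{\loc}$, applying (iii) to each $g_j$ and passing to the limit. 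The main obstacle is the absorption bookkeeping in (iii): every error term must be dominated by the good term using $\ep_0$-smallness, with the final constant depending only on $n, r, \ell, K_0$; a secondary subtlety is the Cauchy/uniqueness argument needed in (ii) and (iv) to identify subsequential limits with the canonical solution.
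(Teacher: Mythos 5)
Your proposal is broadly aligned with the paper's strategy for (i), (iii), (iv), and (v): localize the Shi--Simon interior estimates for (i), run a weighted $L^2$ energy estimate on $u = g(t)-g(0)$ with the split $\gradh g = \gradh u + \gradh g(0)$ and Young's inequality for (iii), and then mollify and pass to the limit for (iv) and (v). (The paper uses a linear cut-off $\eta$ rather than $\eta^2$, but that is cosmetic.)

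The genuine gap is in (ii). You propose to control $\int |g(t)-g_0|^2$ via the triangle inequality
\[
  |g(t)-g_0| \leq |g(t)-g(i)(t)| + |g(i)(t)-g(i)(0)| + |g(i)(0)-g_0|
\]
together with a ``Cauchy-in-$L^2$ argument for $g(i)-g(j)$, which satisfies a linear parabolic system with coefficients close to $h$.'' This is where the argument breaks. The difference $u_{ij} := g(i) - g(j)$ satisfies
\[
  \partt u_{ij} = g(i)^{ab}\gradh^2_{ab} u_{ij} + \big(g(i)^{ab}-g(j)^{ab}\big)\gradh^2_{ab} g(j) + \cdots,
\]
and the second term brings in $\gradh^2 g(j)$, which by (i) is only known to satisfy $|\gradh^2 g(j)| \lesssim t^{-1}$ as $t \downto 0$; similarly the lower-order part brings in $\gradh g(j)$, which blows up like $t^{-1/2}$. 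So the coefficients of the linearized system are not uniformly bounded near $t=0$, a naive Gronwall estimate produces an integrand of order $t^{-1}$, and the resulting bound is not finite. A stability estimate for two Ricci--DeTurck solutions in this low regularity regime does exist (cf.\ the $L^2$-lemma of \cite{deruelle2021regularity}, which the paper mentions in passing), but it relies on the special quadratic structure of \eqref{Meq} and a nonlinear weight of the form $|g_1-g_2|^2\big(1+L(|g_1-h|^2+|g_2-h|^2)\big)$; it is not the elementary observation ``linear parabolic system with coefficients close to $h$'' that you invoke. The paper sidesteps this entirely: rather than comparing $g(i)(t)$ with $g(j)(t)$, it differentiates $\int_M \eta |g(i)(t) - g(j)(0)|^2\,dh$ for a \emph{fixed} smooth metric $g(j)(0)$. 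Because $g(j)(0)$ is time-independent, no unbounded coefficient appears; one obtains an estimate with a constant $c_j$ depending on $\|\gradh g(j)(0)\|_{L^2(B_h(x_0,\ell))}$, passes $i\to\infty$ at fixed $j$, and then lets $t\downto 0$ faster than $c_j^{-1}$, choosing $j$ large at the end. This is a genuinely different, and simpler, comparison.

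A secondary and smaller issue: in (iv) you invoke ``parabolic uniqueness in the class $|g-h|\leq 2\ep_0$'' to identify the subsequential limit with the solution of Theorem~\ref{C0RicciDeTurckthm}. Such a uniqueness statement is again a nontrivial result (and would face the same unbounded-coefficient difficulty). The paper avoids it by observing that the solution of Theorem~\ref{C0RicciDeTurckthm} is itself constructed as a limit of the same mollified flows, so the identification is by construction, not by an independent uniqueness theorem. You should replace the appeal to uniqueness by this observation, or else supply the uniqueness argument in the almost-continuous class.
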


\begin{proof}
The proof of (i) is identical to the one given in the proof of Lemmata 4.1,~4.2 in \cite{simon2002deformation}: the proofs there are local, and require only $|g(t)-h| \leq \ep_0(n)$ and $\sup_M |\gradh^j \Riem(h)| = K_j < \infty$.

For the proof of (ii), let $\eta: B_h(x_0,4r) \to [0,\infty)$, $0 \leq \eta \leq 1$ be a smooth cut-off function with $|\gradh \eta|^2 \leq c(n,r,\ell) \eta$ and $\eta \equiv 1$ in $B_h(x_0,r)$, $\eta \equiv 0$ in $(B_h(x_0,\ell))^c$. Fix $j\in \N$. Then we compute
\begin{align*}
  &\frac{1}{2}\partt \int_M \eta |g(i)(t)-g(j)(0)|^2 dh \\
  &\quad = \int_M \eta h \left( \partt g(i)(t), g(i)(t)-g(j)(0) \right) dh \\
  &\quad = \int_M \eta h^{ks} h^{lv}\Big(  g(i)^{rm}(t) \gradh_{rm}^2 g(i)_{kl}(t) \\
  &\qqqqquad + (h^{-1} * g(i)(t)^{-1} * g(i)(t) * \Rm(h))_{kl} \\
  &\qqqqquad + (g(i)(t)^{-1} * g(i)(t)^{-1} * \gradh g(i)(t) * \gradh g(i)(t))_{kl} \Big) \\
  &\qqqquad \cdot (g(i)(t)-g(j)(0))_{sv} dh \\
  &\quad \leq \int_M \Big( -\eta \frac{1}{2} |\gradh g(i)(t)|^2 + \eta c(n) |\gradh g(i)(t)| |\gradh g(j)(0)| \\
  &\qqqqquad + \eta c(n,K_0) + |\gradh \eta| |\gradh g(i)(t)| \Big) dh \\
  &\quad \leq \int_{B_{h}(x_0,\ell)} \Big( c(n,r,\ell,K_0) - \eta \frac{1}{8} |\gradh g(i)(t)|^2 + \eta c(n) |\gradh g(j)(0)|^2 \Big) dh,
\end{align*}
where we used that $|g(i)(t)-g(j)(0)| \leq |g(i)(t)-h| + |h-g(j)(0)| \leq c(n) \ep_0(n) \leq \frac{1}{100}$ and $|\gradh \eta|^2 \leq c(n,r,\ell) \eta$.

Hence
\begin{align}
  \label{litleestin}
  &\int_{B_h(x_0,r)}  |g(i)(t)-g(j)(0)|^2 dh + \int_{0}^t \int_{B_h(x_0,r)}   |\gradh g(i)(s)|^2 dh \, ds \\
  \nonumber &\quad \leq  \int_{B_h(x_0,\ell)}   |g(i)(0)-g(j)(0)|^2 dh  + c_j(n,r,\ell,K_0) t
\end{align}
for all $t \in [0,T ]$, where $c_j(n,r,\ell,K_0) := c(n,r,\ell,K_0) \int_{B_{h}(x_0,\ell)} (|\gradh g(j)(0)|^2+1) dh$.
Taking a limit $i\to \infty$, that is, after taking a  subsequence of the solutions
$(B_{h}(x_0,4r),$ $g(i)(t))_{t \in (0,T]}$, choosing $\ell=2r$, and using the estimate \eqref{litleestin} and the estimates from part (i), we obtain a smooth solution $(B_{h}(x_0,4r),g(t))_{t \in (0,T]}$, satisfying the estimates \eqref{initgradeest}, such that
\begin{align*}
  & \int_{B_h(x_0,r)} |g(t)-g(j)(0)|^2 dh + \int_{\si}^t \int_{B_h(x_0,r)} |\gradh g(s)|^2 dh \, ds \\
  & \qquad \leq c_j(n,r,2r,K_0) t
  +  \int_{B_h(x_0,\ell)} |g_0-g(j)(0)|^2 dh
\end{align*}
for all $0<\si<t<T$, and hence
\begin{align*}
  & \int_{B_h(x_0,r)} |g(t)-g(j)(0)|^2 dh + \int_{0}^t \int_{B_h(x_0,r)} |\gradh g(s)|^2 dh \, ds \\
  & \qquad \leq c_j(n,r,2r,K_0) t
  +  \int_{B_h(x_0,\ell)} |g_0-g(j)(0)|^2 dh.
\end{align*}

Using this, and the triangle inequality, we see that
\begin{align*} 
  &\int_{B_h(x_0,r)} |g(t)-g_0|^2 dh + \int_{0}^t \int_{B_h(x_0,r)} |\gradh g(s)|^2 dh \, ds \\
  &\quad \leq \int_{B_h(x_0,r)} |g(t)-g(j)(0)|^2 dh + \int_{B_h(x_0,r)} |g_0-g(j)(0)|^2 dh \\
  &\qquad  + \int_{0}^t \int_{B_h(x_0,r)} |\gradh g(s)|^2 dh \, ds \\
  &\quad \leq c_j(n, r,2r,K_0) t + v_j,
\end{align*}
where $v_j = \frac{1}{j} + 2\int_{B_h(x_0,\ell)} |g_0-g(j)(0)|^2 dh \to 0$ as $j \to \infty$, and hence  
\begin{equation*}
  \int_{B_h(x_0,r)} |g(t)-g_0|^2 dh + \int_{0}^t \int_{B_h(x_0,r)} |\gradh g(s)|^2 dh \, ds \leq 2 v_j
\end{equation*}
for all $t \leq T_j = \frac{v_j}{c_j(n,r,2r,K_0)}$. Notice that $T_j>0$ (since $v_j>0$), so   the  estimate  holds for $t\in (0,T_j]$ and 
$(0,T_j]$ is non-empty. 
That is,
\begin{equation*} 
  \int_{B_h(x_0,r)} |g(t)-g_0|^2 dh + \int_{0}^t \int_{B_h(x_0,r)} |\gradh g(s)|^2 dh \, ds \to 0
\end{equation*}
as $t \downto 0$. This proves (ii).

The proof of (iii) is achieved by slightly modifying the proof of (ii):
the proof of (ii) shows that 
\begin{align*}
  &\frac{1}{2} \partt \int_M \eta |g(t)-g(0)|^2 dh  \\
  &\quad \leq \int_{B_{h}(x_0,2r)} \left( c(n,r,\ell,K_0) - \eta \frac{1}{8} |\gradh g(t)|^2 + \eta c(n) |\gradh g(0)|^2 \right) dh \\
  &\quad \leq c(n,r,\ell,K_0) \left( 1 + 2 \int_{B_h(x_0,\ell)} |\gradh g(0)|^2 dh \right) - \int_M \eta \frac{1}{8} |\gradh g(t)|^2 dh,
 \end{align*}
which, after integrating in time, implies the result.  
   
The statements (iv) and (v) can be proved by mollifying $g_0$ and applying the previous results, as we now explain.  
Mollifying $g_0$, we obtain $g(i)(0)$ with $|g(i)(0)-h|\leq c(n) \ep_0(n)$ (without loss of generality $\leq \ep_0(n)$) and $g(i)(0) \to g_0$ locally in $L^2$ in the case of (iv), respectively $g(i)(0) \to g_0$ locally in $W^{1,2}$ in the case of (v), as $i \to \infty$. 

Let $g(i)(t)$ be the solutions constructed in \cite{simon2002deformation} (see Theorem~\ref{C0RicciDeTurckthm}) and let $g(t)_{t \in (0,T)}$ be the solution obtained by taking a limit as  $i \to \infty$ of a subsequence of the sequence of solutions $(g(i)(t)_{t \in (0,T)})_{i \in \N}$. This solution agrees with the solution obtained in \cite{simon2002deformation}, as the same construction was used there. 
In the case that $g_0 \in L^2_{\loc}(M)$, the estimate in (ii) (applied to $g$) implies (iv). 
In the case that $g_0 \in W^{1,2}_{\loc}(M)$, we see that (v) holds by applying the estimates in (iii) to each $g(i)$ and then taking a limit $i \to \infty$. 
\end{proof}

\section{Ricci--DeTurck flow of almost continuous \texorpdfstring{$W^{1,2+2\si}_{\loc}$}{W\textasciicircum{}1,2+2σ\textunderscore{}loc}-metrics}

We show in this section that solutions to Ricci--DeTurck flow of almost continuous metrics which are in $W^{1,2+2\si}_{\loc}$, for $\si \in [0,\frac{1}{4})$, remain bounded uniformly in time in $W^{1,2+2\si}_{\loc}$ and approach their initial values in the $W^{1,2+\si}_{\loc}$-sense. We first prove estimates.

\begin{thm}
  \label{W1siRicciDeTurck}
  For any $n \in \N$ there exists $\ep_0(n) > 0$ such that the following holds.
  Let $(M^n,h)$ be a smooth, connected $n$-dimensional Riemannian manifold (not necessarily complete) with $K_j := \sup_M |\gradh^j \Rm(h)| < \infty$ for all $j \in \N_0$, and $ B_{h}(x_0,4R) \cc M$. 
  Suppose that $g(t)_{t \in [0,T]}$, $T\leq 1$  is a smooth solution of the Ricci--DeTurck flow on $B_{h}(x_0,R)$ which satisfies 
  \begin{equation*}
    |g(x,t)-h(x)| \leq \ep_0(n)
  \end{equation*}
  for all $t \in [0,T]$, $x \in B_{h}(x_0,R)$.
      
  Then it holds that
  \begin{align}
    \label{eq:SobolevEstimate2}
    &\int_{B_h(y_0,r)} |\gradh g(t)|^{2+2\si} dh \\
    \nonumber &\quad + \int_0^t \int_{B_h(y_0,r)} \left( |\gradh g(s)|^{4+2\si} + |\gradh g(s)|^{2\si} |\gradh^2 g(s)|^2 \right) dh \, ds \\
    \nonumber &\quad \leq c(n)\int_{B_h(y_0,\ell)} |\gradh g(0)|^{2+2\si} dh + t \cdot c(n,r,\ell,K_0,K_1) \left( 1+\int_{B_h(y_0,\ell)}|\gradh g(0)|^2 \right)    
  \end{align}
  for any  $y_0 \in B_{h}(x_0,R)$, $\si \in [0,\frac{1}{4}]$, $0<r<\ell<\de(n)$ with $B_h(y_0,4\ell) \subseteq  B_{h}(x_0,R)$,  $t\leq T$.
\end{thm}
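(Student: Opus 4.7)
The plan is to prove this as a parabolic energy estimate for the auxiliary quantity $v := u^{1+\si}$ where $u := |\gradh g|_h^2$. First, I would commute $\gradh$ with the Ricci--DeTurck equation \eqref{Meq} to derive a pointwise evolution inequality. Writing \eqref{Meq} schematically as $\partt g = g^{-1}\ast\gradh^2 g + L(g,h) + Q(g,\gradh g)$, with $|L|\leq C(K_0)$ and $|Q|\leq C(n)|\gradh g|^2$, differentiating once in the spatial variable (using $\gradh h = 0$, the commutator $[\gradh_a,\gradh_b] = \Rm(h)\ast$, and absorbing the cross term $|\gradh g|^2|\gradh^2 g|$ via Young's inequality), one obtains
\begin{equation*}
\partt u \leq g^{ab}\gradh_a\gradh_b u - c(n)|\gradh^2 g|^2 + C(n,K_0,K_1)(u^2+1),
\end{equation*}
where $c(n)>0$ uses the ellipticity $g\sim h$ afforded by $|g-h|\leq\ep_0(n)$. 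Multiplying by $(1+\si)u^\si$ and using the chain rule identity
\begin{equation*}
(1+\si)u^\si g^{ab}\gradh_a\gradh_b u = g^{ab}\gradh_a\gradh_b v - \si(1+\si)u^{\si-1}g^{ab}\gradh_a u\gradh_b u,
\end{equation*}
in which the second term is manifestly non-positive and is dropped, yields
\begin{equation*}
\partt v \leq g^{ab}\gradh_a\gradh_b v - c\, u^\si|\gradh^2 g|^2 + C(u^{\si+2}+v+1).
\end{equation*}

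Next, I would pick a smooth cutoff $\eta$ with $\eta\equiv 1$ on $B_h(y_0,r)$, $\eta\equiv 0$ outside $B_h(y_0,\ell)$, and $|\gradh\eta|\leq C/(\ell-r)$, test against $\eta^{2N}$ for a suitable power $N\geq 1$, and integrate against $dh$ over $M$. Using Kato's inequality $|\gradh u|\leq 2u^{1/2}|\gradh^2 g|$ (so that $|\gradh v|\leq C u^{\si+1/2}|\gradh^2 g|$), the cross terms produced by integration by parts, namely those involving $(\gradh\eta^N)(\gradh v)$ and $(\gradh g^{-1})(\gradh v)$, can be estimated by Young's inequality and partially absorbed into the good spacetime term, leaving the differential inequality
\begin{equation*}
\tfrac{d}{dt}\int \eta^{2N} v\,dh + \tfrac{c}{2}\int \eta^{2N} u^\si|\gradh^2 g|^2\,dh \leq C\int \eta^{2N} u^{\si+2}\,dh + C\int (|\gradh\eta^N|^2+\eta^{2N})v\,dh + C.
\end{equation*}
The main obstacle is now the $\int \eta^{2N} u^{\si+2}$ term on the right, which must be relocated to the left (either absorbed into the good $u^\si|\gradh^2 g|^2$ term or recovered as spacetime-controlled data) to produce the stated $\int_0^t\int|\gradh g|^{4+2\si}$ bound in \eqref{eq:SobolevEstimate2}.

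I would handle this obstacle as follows. By Kato, $|\gradh(u^{(1+\si)/2})|^2 \leq (1+\si)^2 u^\si|\gradh^2 g|^2$, so the good spacetime term controls the Dirichlet energy of $w := u^{(1+\si)/2}$. The restriction $\ell<\de(n)$ guarantees, via the bounded geometry of $(M,h)$ and Gromov volume comparison, an almost-Euclidean Sobolev inequality on $B_h(y_0,\ell)$ with constants depending only on $n$ and $K_0$. Combining Sobolev with a Gagliardo--Nirenberg interpolation between $\|w\|_{L^2}$ and $\|\gradh w\|_{L^2}$, and invoking the constraint $\si\in[0,\tfrac14]$ to ensure the resulting exponents admit a Young-type absorption, one shows
\begin{equation*}
C\int\eta^{2N}u^{\si+2}\,dh \leq \tfrac{c}{4}\int \eta^{2N} u^\si|\gradh^2 g|^2\,dh + \text{lower-order terms},
\end{equation*}
where the lower-order terms are controlled, once integrated in time, through the $L^2$-estimate of Theorem~\ref{L2lemmas}(iii), which delivers $\int_0^t\int|\gradh g|^2 \leq t\cdot c(n,r,\ell,K_0)(1+\int|\gradh g(0)|^2)$. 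Integrating the resulting inequality in time and applying Grönwall's lemma (with $t\leq T\leq 1$, so the exponential factor is bounded by a dimensional constant) then yields the claimed estimate: the prefactor $c(n)$ in front of $\int|\gradh g(0)|^{2+2\si}$ comes from the Grönwall factor, while the linear-in-$t$ forcing $t\cdot c(n,r,\ell,K_0,K_1)(1+\int|\gradh g(0)|^2)$ collects the cutoff errors together with the $L^2$-bound of Theorem~\ref{L2lemmas}(iii).
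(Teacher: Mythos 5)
Your overall template---deriving a pointwise evolution inequality for a power of $u=|\gradh g|^2$, cutting off, and integrating---is reasonable, but the step where you attempt to absorb $\int\eta^{2N}u^{\sigma+2}\,dh$ via a Sobolev--Gagliardo--Nirenberg interpolation contains a genuine gap, and this is precisely the term the paper handles by a completely different mechanism.

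Set $w:=u^{(1+\sigma)/2}$, so $u^{\sigma}|\gradh^2 g|^2\gtrsim|\gradh w|^2$ and $u^{2+\sigma}=w^p$ with $p=2(2+\sigma)/(1+\sigma)$. Gagliardo--Nirenberg with $L^2$-endpoints gives $\|w\|_{L^p}\lesssim\|\gradh w\|_{L^2}^{\theta}\|w\|_{L^2}^{1-\theta}$ with $\theta=n\bigl(\tfrac12-\tfrac1p\bigr)$, hence $p\theta=\tfrac{n(p-2)}{2}=\tfrac{n}{1+\sigma}$. For $\sigma\in[0,\tfrac14]$ and $n\geq3$ this gives $p\theta\geq\tfrac{4n}{5}\geq\tfrac{12}{5}>2$, so $\|w\|_{L^p}^p\lesssim\|\gradh w\|_{L^2}^{p\theta}\|w\|_{L^2}^{p(1-\theta)}$ is \emph{superquadratic} in $\|\gradh w\|_{L^2}$ and cannot be absorbed into $\tfrac{c}{4}\int\eta^{2N}|\gradh w|^2$ by Young. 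Interpolating against $\|w\|_{L^1}$ instead gives $p\theta=\tfrac{6n}{n+2}$, even larger. Taking $\ell<\delta(n)$ small controls the Sobolev constant, but gives you no smallness of $\|w\|_{L^q}$ and therefore does not repair the absorption; the constraint $\sigma\in[0,\tfrac14]$ does not help either. In short, the sign of the quartic term is wrong and the energy inequality does not close.

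The paper sidesteps this obstruction \emph{pointwise}, before any integral estimate, by working with the modified quantity
\begin{equation*}
\omega:=(1+L|g-h|^2)\,|\gradh g|^2,\qquad L=\ep_0(n)^{-1/2},
\end{equation*}
a device taken from Deruelle--Lamm--Simon. The evolution of $|g-h|^2$ carries a strongly negative term $-\tfrac32|\gradh g|^2$ (from the Laplacian term hitting the inner product with $g-h$), so the cross term $|\gradh g|^2\cdot\partial_t\bigl(L|g-h|^2\bigr)$ contributes $-\tfrac32L|\gradh g|^4$, which for $L$ large dominates the bad $+c(n)|\gradh g|^4$ coming from the evolution of $|\gradh g|^2$. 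One thus obtains
\begin{equation*}
\partial_t\omega\leq g^{ab}\gradh_a\gradh_b\omega-c\,|\gradh^2 g|^2-c\,L\,|\gradh g|^4+c(n),
\end{equation*}
i.e., the quartic term already enters with a good sign, and after cutting off and integrating, $\int_0^t\int|\gradh g|^4$ (and, in the $\sigma$-version with $(\alpha+\omega)^{1+\sigma}$, the term $\int_0^t\int|\gradh g|^{4+2\sigma}$) lands on the left-hand side for free. Your treatment of the concavity correction $-\sigma(1+\sigma)u^{\sigma-1}|\gradh u|^2$ and the appeal to Theorem~\ref{L2lemmas}(iii) for the forcing are in the right spirit and match what the paper does at the end, but without the multiplier $(1+L|g-h|^2)$ the argument cannot reach that stage.
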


\begin{proof}
For reasons of exposition, and for later use,  we prove the case $\si =0$ first. 
We scale the solution $g$, if necessary, by a constant $C(n,K_0,K_1) \geq 1$, so that $K_0 + K_1 = \sup_{M} |\Rm(h)| + |\gradh \Rm(h)| \leq 1$ and prove the estimates in this setting. Scaling back by the constant $C(n,K_0,K_1)^{-1}$ then implies the desired estimates.

We will calculate the evolution equation of
\begin{equation*}
  \omega(\cdot,t):= |\gradh g|^2(\cdot,t) (1+ L|g(\cdot,t)-h(\cdot)|^2)
\end{equation*}
for some suitably chosen large $L=L(n)$ and suitably chosen small $\ep_0(n)$, with $|g(t)-h| \leq \ep_0(n)$. We use a number of times in the proof that, without loss of generality, $L \ep_0(n)$ is small: We choose $L = \frac{1}{\sqrt{\ep_0(n)}}$, which implies $L \ep_0(n) \leq \sqrt{\ep_0(n)}$.

A quantity similar to $\omega$, albeit in a slightly different setting, was considered in the $L^2$-lemma of \cite{deruelle2021regularity}. There, the quantity $|g_1(t) - g_2(t)|^2 \big(1 + L(|g_1(t)-h|^2 + |g_2(t)-h|^2) \big)$ was considered, where $g_1$, $g_2$ are solutions which start from the almost continuous data $g_1(0) = g_2(0)$.

We use the following estimate of  $|\grad \omega|$ freely in the following, without further mention:
\begin{align*}
  |\grad \omega| &= |\grad (|\gradh g|^2) (1 + L |g-h|^2) + |\gradh g|^2 \, \grad (1 + L|g-h|^2)| \\
  &\leq c(n) \big( |\grad^2 g| |\grad g| + |\grad g|^3 \big).
\end{align*}

To begin, taking a derivative in $t$ gives
\begin{align}
  \label{evn1}
  \nonumber \pd{}{t} \abs{\gradh g}^2 
  &= \pd{}{t} h^{mn} h^{ik} h^{j\ell} (\gradh_m g_{ij} \gradh_n g_{k\ell}) \\
  \nonumber &= 2 h^{mn} h^{ik} h^{j\ell} \, \gradh_m (g^{ab} \, \gradh_{a} \gradh_{b} g_{ij}) \gradh_n g_{k\ell} \\
  \nonumber &\quad + 2 h^{mn} h^{ik} h^{j\ell} \, \gradh_m \big( (h^{-1} * g^{-1} * g * \Rm(h))_{ij} \\
  \nonumber &\qquad + (g^{-1} * g^{-1} * \gradh g * \gradh g)_{ij} \big) \gradh_n g_{k \ell} \\
  \nonumber &= 2 h^{mn} h^{ik} h^{j\ell} g^{ab} (\gradh_m \gradh_{a} \gradh_{b} g_{ij}) \gradh_n g_{k\ell} \\
  \nonumber &\quad + 2 h^{-1} * h^{-1} * h^{-1} * g^{-1} * g^{-1} * g^{-1} * \gradh g * \gradh^2 g * \gradh g \\
  \nonumber &\quad + 2 h^{-1} * h^{-1} * h^{-1} * h^{-1} * \Big( g^{-1} * g^{-1} * \gradh g * g * \Rm(h) \\
  \nonumber &\qquad + g^{-1} * \gradh g * \Rm(h) + g * g^{-1} * \gradh \Rm(h) \Big) * \gradh g \\
  \nonumber &\quad + 2 h^{-1} * h^{-1} * h^{-1} * \Big(g^{-1} * g^{-1} * \gradh^2 g * \gradh g \\
  \nonumber &\qquad + g^{-1} * g^{-1} * \gradh g * \gradh g * \gradh g \Big) * \gradh g \\
  \nonumber &\leq 2 h^{mn} h^{ik} h^{j\ell} g^{ab} (\gradh_m \gradh_{a} \gradh_{b} g_{ij}) \gradh_n g_{k \ell} \\
  \nonumber &\quad + c(n) |\gradh^2 g| |\gradh g|^2 + c(n)|\gradh g|^2 +c(n)|\gradh g| + c(n) |\gradh g|^4 \\
  &\leq 2 h^{mn} h^{ik} h^{j\ell} g^{ab} (\gradh_m \gradh_{a} \gradh_{b} g_{ij}) \gradh_n g_{k \ell} \\
  \nonumber &\quad + c(n) |\gradh^2 g| |\gradh g|^2 + c(n) |\gradh g|^2 + c(n) |\gradh  g| + c(n) |\gradh g|^4.
\end{align}

Taking derivatives in space, we obtain 
\begin{align*}
  \grad_b \abs{\grad g}^2 &= \grad_b (h^{mn} h^{ik} h^{j\ell} \, \grad_m g_{ij} \grad_n g_{k\ell}) \\
  &= h^{mn} h^{ik} h^{j\ell} (\grad_b \grad_m g_{ij} \grad_n g_{k\ell} + \grad_m g_{ij} \grad_b \grad_n g_{k\ell}),
\end{align*}
and hence 
\begin{align*}
  \grad_a \grad_b \abs{\grad g}^2 &= h^{mn} h^{ik} h^{j\ell} \, \grad_a (\grad_b \grad_m g_{ij} \grad_n g_{k\ell} + \grad_m g_{ij} \grad_b \grad_n g_{k\ell}) \\
  &= h^{mn} h^{ik} h^{j\ell} (\grad_a \grad_b \grad_m g_{ij} \grad_n g_{k\ell} + \grad_b \grad_m g_{ij} \grad_a \grad_n g_{k\ell} \\
  &\qquad + \grad_a \grad_m g_{ij} \grad_b \grad_n g_{k\ell} + \grad_m g_{ij} \grad_a \grad_b \grad_n g_{k\ell}) \\
  &= 2 h^{mn} h^{ik} h^{j\ell} (\grad_a \grad_b \grad_m g_{ij} \grad_n g_{k\ell} + \grad_b \grad_m g_{ij} \grad_a \grad_n g_{k\ell}).
\end{align*}
By swapping the order of the third order covariant derivatives appearing in this equation, we introduce curvature terms:  
\begin{align*}
  g^{ab} \, \grad_a \grad_b \abs{\grad g}^2 &= 2 h^{mn} h^{ik} h^{j\ell} g^{ab} (\grad_a \grad_b \grad_m g_{ij} \grad_n g_{k\ell} \\
  &\quad + \grad_b \grad_m g_{ij} \grad_a \grad_n g_{k\ell}) \\
  &= 2 h^{mn} h^{ik} h^{j\ell} g^{ab} \Big((\grad_m \grad_a \grad_b g_{ij} \\
  &\qquad + \Rm\indices{_{am}^p_b}(h) \grad_p g_{ij} + \Rm\indices{_{am}^p_i}(h) \grad_b g_{pj} \\
  &\qquad + \Rm\indices{_{am}^p_j}(h) \grad_b g_{ip} + \grad_a \Rm\indices{_{bm}^p_i}(h) g_{pj} \\
  &\qquad + \Rm\indices{_{bm}^p_i}(h) \grad_a g_{pj} + \grad_a \Rm\indices{_{bm}^p_j}(h) g_{ip} \\
  &\qquad + \Rm\indices{_{bm}^p_j} \grad_a g_{ip}) \grad_n g_{k\ell} \\
  &\quad + \grad_b \grad_m g_{ij} \grad_a \grad_n g_{k\ell}\Big),
\end{align*}
which implies, using $\sup_{B_h(x_0,4R)}  |\Rm(h)|+ |\grad \Rm (h)| \leq 1$, that
\begin{align*}
  & 2 h^{mn} h^{ik} h^{j\ell} g^{ab} ( \grad_m \grad_a \grad_b g_{ij} \grad_n g_{k\ell} ) \\
  &\quad \leq g^{ab} \, \grad_a \grad_b \abs{\grad g}^2 + c(n)(|\grad g| + |\grad g|^2) - \frac{3}{2} |\grad^2 g|^2 \\
  &\qquad + c(n) |\grad^2 g| |\grad g|^2 + c(n) |\grad g|^2 + c(n)|\grad g| + c(n) |\grad g|^4 \\
  &\quad \leq g^{ab} \, \grad_a \grad_b \abs{\grad g}^2 - \frac{4}{3} |\grad^2 g|^2 + c(n)(1+|\grad g|^4),
\end{align*}
and hence, using \eqref{evn1}, we see that
\begin{equation}
  \label{partgradhsquared} 
  \pd{}{t} \abs{\grad g}^2 \leq g^{ab} \, \grad_a \grad_b \abs{\grad g}^2 - |\grad^2 g|^2 + c(n)(1+|\grad g|^4).
\end{equation}

We calculate (see \cite{schnurer2008stability,schnurer2011stability}, where a similar calculation was performed)
\begin{align}
  \label{partgminushsquared}
  \nonumber \pd{}{t} \abs{g-h}^2 = {}& 2 h^{ik} h^{j\ell}  (g_{k\ell} - h_{k\ell}) \pd{}{t} g_{ij} \\
  \nonumber = {}& 2 h^{ik} h^{j\ell} (g_{k\ell} - h_{k\ell}) \Big(g^{ab} \, \grad_a \grad_b g_{ij} \\
  \nonumber {}& - g^{mn} g_{ip} h^{pq} \Rm_{jmqn}(h) - g^{mn} g_{jp} h^{pq} \Rm_{imqn}(h) \\
  \nonumber {}& + \frac{1}{2} g^{ab} g^{pq} (\grad_i g_{pa} \grad_j g_{qb} + 2 \grad_a g_{jp} \grad_q g_{ib} \\
  \nonumber &\quad - 2 \grad_a g_{jp} \grad_b g_{iq} - 2 \grad_j g_{pa} \grad_b g_{iq} - 2 \grad_i g_{pa} \grad_b g_{jq})\Big) \\
  \leq {}& g^{ab} \, \grad_a \grad_b \abs{g-h}^2 - \frac{3}{2} |\grad g|^2 + c(n)|\ep_0(n)|.
\end{align}

Using the estimates \eqref{partgradhsquared} and \eqref{partgminushsquared}, we obtain
\begin{align}
  \label{intermediate}
  \nonumber \pd{}{t} \omega &= \pd{}{t}\Big( (1+L\abs{g-h}^2) \abs{\grad g}^2 \Big) \\
  \nonumber &= (1+L\abs{g-h}^2) \pd{}{t} (\abs{\grad g}^2) + \abs{\grad g}^2 \pd{}{t} (L \abs{g-h}^2) \\
  \nonumber &\leq (1+L\abs{g-h}^2) \Big( g^{ab} \, \grad_a \grad_b \abs{\grad g}^2 - \frac{6}{5} |\grad^2 g|^2 + c(n)(1+|\grad g|^4) \Big) \\
  \nonumber &\quad + |\grad g|^2 \Big( g^{ab} \, \grad_a \grad_b (1 + L\abs{g-h}^2) - \frac{3}{2} L |\grad g|^2 + c(n) L|\ep_0(n)| \Big) \\
  &\leq g^{ab} \, \grad_a \grad_b \omega - 2 g^{ab} \, \grad_a (1 + L\abs{g-h}^2) \grad_b (\abs{\grad g}^2) \\
  \nonumber &\quad - \frac{7}{6} |\grad^2 g|^2 - L |\grad g|^4 + c(n)
\end{align}      
where we used Young's inequality and the fact that $L|g-h|^2 \leq L \ep^2_0(n)c(n) \leq \ep_0(n) \ll 1$.
The term $-2 g^{ab} \, \grad_a (1 + L\abs{g-h}^2) \grad_b (\abs{\grad g}^2)$ appearing in this inequality may be estimated as follows:
\begin{align*}
  |g^{ab} \, \grad_a (1 + L\abs{g-h}^2) \grad_b (\abs{\grad g}^2)| &\leq c(n) L \ep_0(n) |\grad g|^2 |\grad^2 g| \\
  &\leq c(n) \sqrt{\ep_0(n)} |\grad g|^4 + c(n)\sqrt{\ep_0(n)} |\grad^2 g|^2.
\end{align*}
Inserting this into \eqref{intermediate}, we get
\begin{align}
  \label{finalomega}
  \pd{}{t} \omega \leq g^{ab} \, \grad_a \grad_b \omega - \frac{8}{7} |\grad^2 g|^2 - \frac{3}{4}L |\grad g|^4 + c(n).
\end{align}

Let $\eta$ be a cut-off function (in space) satisfying $0 \leq \eta \leq 1$, $\eta \equiv 1$ in $B_h(y_0,r)$, $\eta \equiv 0$ in $B_h(y_0,\frac{r+\ell}{2})^c$, and $\frac{|\grad \eta|^2}{\eta} + \frac{|\grad \eta|^4}{\eta^3} + \frac{|\grad^2 \eta|^2}{\eta} \leq c(n,r,\ell)$. 
Cut-off functions of this type always exist, and can be constructed using
Theorem~1 of \cite{tam2010exhaustion} (or \cite[Lemma~12.30]{chowetal}) and a scaling argument, replacing $\eta$ by $\eta^{1000}$ if necessary. 
Using the estimate \eqref{finalomega}, we obtain 
\begin{align}
  \label{finalomegacutoff}
  \nonumber \pd{}{t} (\eta \omega) &\leq g^{ab} \, \grad_a \grad_b (\eta \omega) +c(n) \eta - \frac{8}{7} \eta |\grad^2 g|^2 \\
  \nonumber &\quad - \frac{3}{4} L \eta |\grad g|^4 - \omega g^{ab} \, \grad_a \grad_b \eta - 2 g^{ab} \, \grad_a \eta \grad_b \omega \\
  \nonumber &\leq g^{ab} \, \grad_a \grad_b (\eta \omega) + c(n) \eta - \frac{8}{7} \eta |\grad^2 g|^2 \\
  \nonumber &\quad - \frac{3}{4} L \eta |\grad g|^4 + c(n) |\grad^2 \eta| \omega + c(n) |\grad \eta| \Big(|\grad^2 g| |\grad g| + |\grad g|^3\Big) \\
  \nonumber &\leq g^{ab} \, \grad_a \grad_b (\eta \omega) + c(n) \eta \\
  \nonumber &\quad + c(n) \left(\frac{|\grad \eta|^2}{\eta} + |\grad^2 \eta|\right) |\grad g|^2 - \frac{L}{2} \eta |\grad g|^4 - \frac{1}{2} \eta |\grad^2 g|^2 \\
  &\leq  g^{ab} \, \grad_a \grad_b (\eta \omega) - \frac{L}{4} \eta |\grad g|^4 \\
  \nonumber &\quad - \frac{1}{4} \eta |\grad^2 g|^2 + c(n) \left(\eta + \frac{|\grad \eta|^4}{\eta^3} + \frac{|\grad^2 \eta|^2}{\eta}\right).
\end{align}
Integrating in space and using integration by parts yields
\begin{align*}
  &\partt \int_M (\eta \omega) dh \\
  &\quad \leq \int_M \Big( g^{ab} \, \grad_a \grad_b (\eta \omega) - \frac{L}{4} \eta |\grad g|^4 \\
  &\qqqqquad - \frac{1}{4} \eta |\grad^2 g|^2 + c(n,r,\ell)\chi_{B_h(x_0,\frac{r+\ell}{2})} \Big) dh \\
  &\quad \leq \int_M \Big( c(n) |\grad g| \big(|\grad \eta| \omega + \eta |\grad \omega| \big) - \frac{L}{4} \eta |\grad g|^4 \\
  &\qqqqquad - \frac{1}{4} \eta  |\grad^2 g|^2 + c(n,r,\ell)\chi_{B_h(x_0,\frac{r+\ell}{2})} \Big) dh \\
  &\quad \leq \int_M \Big( c(n) |\grad g| \big(|\grad \eta| |\grad g|^2 + \eta (|\grad g|^3 + |\grad^2 g| |\grad g|) \big) - \frac{L}{4} \eta |\grad g|^4 \\
  &\qqqqquad - \frac{1}{4} \eta |\grad^2 g|^2 + c(n,r,\ell) \chi_{B_h(x_0,\frac{r+\ell}{2})} \Big) dh \\
  &\quad \leq \int_M \Big( c(n,r,\ell)\chi_{B_h(y_0,\frac{r+\ell}{2})} - \frac{L}{8} \eta |\grad g|^4 - \frac{1}{10} \eta |\grad^2 g|^2    \Big) dh.
\end{align*}
Integrating in time leads to the estimate \eqref{eq:SobolevEstimate2} in the case $\si = 0$.
   
To obtain the  estimate \eqref{eq:SobolevEstimate2} for general $\si \in [0,\frac{1}{4}]$, we modify this argument slightly.
For $\al \in (0,1)$, $\si \in (0, \frac{1}{4})$ our estimate \eqref{finalomega} implies that
\begin{align*}
  \nonumber \pd{}{t} (\al+\omega)^{1+\si} &\leq g^{ab} \, \grad_a \grad_b (\al+\omega)^{1+\si} + c(n)(\al+\omega)^{\si} \\
  \nonumber &\quad - (1+\si)\si(\al+\omega)^{\si-1} g^{ab} \, \grad_a \omega \grad_b \omega - \frac{8}{7}(1+\si) (\al+\omega)^{\si} |\grad^2 g|^2 \\
  \nonumber &\quad - (1+\si) (\al+\omega)^{\si} \frac{3}{4}L |\grad g|^4 \\
  &\leq g^{ab} \, \grad_a \grad_b (\al+\omega)^{1+\si} + c(n)(\al+\omega)^{\si} \\
  \nonumber &\quad - \frac{8}{7}(1+\si) (\al+\omega)^{\si} |\grad^2 g|^2 - (1+\si) (\al+\omega)^{\si}\frac{3}{4}L |\grad g|^4,
\end{align*}
and hence, arguing as in \eqref{finalomegacutoff}, we see that
\begin{align*}
  \nonumber \pd{}{t} (\eta (\al+\omega)^{1+\si}) &\leq g^{ab} \, \grad_a \grad_b (\eta (\al+\omega)^{1+\si}) + c(n) \eta (\al+\omega)^{\si} \\
  \nonumber &\quad + c(n) (1+\si) |\grad^2 \eta| (\al+\omega)^{1+\si} \\
  \nonumber &\quad + (1+\si) c(n) (\al+\omega)^{\si} |\grad \eta| |\grad \omega| \\
  \nonumber &\quad - (1+\si) \eta (\al+\omega)^{\si} |\grad^2 g|^2 - (1+\si) \eta (\al+\omega)^{\si} \frac{3}{4}L |\grad g|^4 \\
  \nonumber &\leq g^{ab} \, \grad_a \grad_b (\eta (\al +\omega)^{1+\si}) + c(n) \eta (\al+\omega)^{\si} \\
  \nonumber &\quad + c(n) |\grad^2 \eta| (\al+\omega)^{\si} (1 + |\grad g|^2) \\
  \nonumber &\quad + c(n) (\al+\omega)^{\si} |\grad \eta| \big(|\grad^2 g| |\grad g| + |\grad g|^3\big) \\
  \nonumber &\quad - (1+\si) \eta (\al+\omega)^{\si} |\grad^2 g|^2 - (1+\si) \eta (\al+\omega)^{\si} \frac{3}{4}L |\grad g|^4 \\
  \nonumber &\leq g^{ab} \, \grad_a \grad_b (\eta (\al + \omega)^{1+\si}) \\
  \nonumber &\quad + c(n) (\al+\omega)^{\si} \left(\frac{|\grad^2 \eta|^2}{\eta}+\chi_{B_h(y_0,\frac{r+\ell}{2})}\right)\\
  \nonumber &\quad + c(n) \frac{|\grad \eta|^2}{\eta} (\al+\omega)^{\si} |\grad g|^2 \\
  \nonumber &\quad - \frac{1}{2} (1+\si) \eta (\al+\omega)^{\si} |\grad^2 g|^2 - \frac{1}{2} (1+\si) \eta (\al+\omega)^{\si} L |\grad g|^4 \\
  &\leq g^{ab} \, \grad_a \grad_b (\eta (\al +\omega)^{1+\si}) \\
  \nonumber &\quad + c(n) (\al+\omega)^{\si} \left(\frac{|\grad^2 \eta|^2}{\eta} + \chi_{B_h(y_0,\frac{r+\ell}{2})} + \frac{|\grad \eta|^4}{\eta^3}\right) \\
  \nonumber &\quad - \frac{1}{4} (1+\si) \eta (\al+\omega)^{\si} |\grad^2 g|^2 - \frac{1}{4} (1+\si)\eta (\al+\omega)^{\si} L |\grad g|^4.             
\end{align*}

We integrate over space, and using integration by parts we obtain
\begin{align*}
  &\pd{}{t} \int_M \eta (\al+\omega)^{1+\si} dh \\
  &\quad \leq \int_M \Big( |\grad g| \big( |\grad \eta| (\al+\omega)^{1+\si} + c(n) \eta (1+\si) (\al+\omega)^{\si} (|\grad g|^3 + |\grad^2 g| |\grad g|) \big) \\
  &\qqqqquad - \frac{1}{4} (1+\si) \eta (\al+\omega)^{\si} |\grad^2 g|^2 - \frac{1}{4} (1+\si) \eta (\al+\omega)^{\si} L |\grad g|^4 \\
  &\qqqqquad + c(n) (\al+\omega)^{\si} \left( \frac{|\grad^2 \eta|^2}{\eta} + \chi_{B_h(y_0,\frac{r+\ell}{2})} + \frac{|\grad \eta|^4 }{\eta^3} \right) \Big) dh \\
  &\quad \leq \int_M \Big( c(n) \left( \frac{|\grad^2 \eta|^2}{\eta} +\chi_{B_h(y_0,\frac{r+\ell}{2})} + \frac{|\grad \eta|^4 }{\eta^3} \right) (\al+\omega)^{\si} \\
  &\qqqqquad - \frac{1}{8} (1+\si) \eta (\al+\omega)^{\si} |\grad^2 g|^2 - \frac{1}{8} (1+\si) \eta (\al+\omega)^{\si} L |\grad g|^4 \Big) dh.
\end{align*}
Integrating the above evolution inequality in time, using $(\al+\omega)^{\si} \leq  (1+|\grad g|)^{2\si} \leq (1+|\grad g|)^{2}$ and letting $\al \to 0$ yields       
\begin{align*}
  &\int_{B_h(y_0,r)} |\grad g|^{2+2\si} dh + \int_0^t \int_{B_h(y_0,r)} \left( |\grad g(s)|^{4+2\si} + |\grad g(s)|^{2\si} |\grad^2 g(s)|^2 \right) dh \, ds \\
  &\quad \leq \int_{B_h(y_0,\frac{\ell+r}{2})} |\grad g(0)|^{2+2\si} dh + c(n,r,\ell) \int_0^t \int_{B_h(y_0,\frac{\ell+r}{2})} (1+|\grad g|^{2}) dh.
\end{align*}
Using (v) from Theorem~\ref{L2lemmas}, we see that the right hand side may be estimated by $\int_{B_h(y_0,\ell)} |\grad g(0)|^{2+2\si} dh + t \cdot c(n,r,\ell,K_0)(1+\int_{B_h(y_0,\frac{\ell+r}{2})} |\grad g(0)|^{2}dh)  $.
Scaling back implies the   estimate \eqref{eq:SobolevEstimate2} for general $\si \in [0,\frac{1}{4}]$.
\end{proof}

Using these estimates, and arguments inspired by the ones given in \cite{lamm2023ricci}, we see that the solution from Theorem~\ref{C0RicciDeTurckthm} constructed in \cite{simon2002deformation} approaches the initial data in the $W^{1,2+\si}_{\loc}(M)$-sense if the initial data is $W^{2+2\si}_{\loc}(M)$ and $\si >0$, as we now explain.

\begin{thm}
  \label{W1siDeturckthm}
  For any $n\in \N$ there exists $\ep_0(n)>0$ such that the following holds. 
  Let $(M^n,h)$ be a smooth, connected, complete $n$-dimensional Riemannian manifold such that $K_j:= \sup_M |\gradh^j \Rm(h)| < \infty$ for all $j \in \N_0$. 
  Assume further that $g_0$ is a Riemannian metric on $M$ for which $(1-\e_0(n)) h \leq g_0 \leq (1+\ep_0(n)) h$ holds,  and that $g_0 \in W^{1,2+2\si}_{\loc}(B_{h}(x_0,2R))$ for some $\si \in [0,\frac{1}{4})$.

  Then the solution $g(t)_{t \in (0,T(n,K_0)]}$ of the Ricci--DeTurck flow from Theorem~\ref{C0RicciDeTurckthm} constructed in \cite{simon2002deformation} satisfies 
  \begin{align}
  \label{bettergradest}
    &\int_{B_h(y_0,r)} |\gradh g|^{2+2\si} dh + \int_0^t \int_{B_h(y_0,r)} \left( |\gradh g(s)|^{4+2\si} + |\gradh g(s)|^{2\si} |\gradh^2 g(s)|^2 \right) dh \, ds \\
    \nonumber &\quad \leq c(n)\int_{B_h(y_0,\ell)} |\gradh g_0|^{2+2\si} dh +  t \cdot c(n,r,\ell,K_0,K_1) \left( 1+\int_{B_h(y_0,\ell)}|\gradh g_0|^2 \right),
  \end{align}
  and
  \begin{equation}
    \label{Lpconvergence}
    \int_{B_{h}(x_0,R)} |g(t) - g_0|^{p}dh \to 0
  \end{equation}
  as $t \downto 0$
  for any $2\leq p < \infty$,
  for any $y_0 \in B_{h}(x_0,R)$, $0<r<\ell<1$ with $B_h(y_0,4\ell) \subseteq B_{h}(x_0,2R)$, $t \leq T$.

  If we further restrict to $\si\in (0, \frac{1}{4})$, then we also have
  \begin{align}
    \label{bettergradest2}  
    \int_{B_h(y_0,r)} |\gradh g|^{2} dh &\leq \int_{B_h(y_0,\ell)}  \abs{\grad g_0}^2 dh \\
    \nonumber &\quad + t^{v(\si)} c(n,r,\ell,K_0,K_1) \left( 1 + \int_{B_h(y_0,\ell)} \abs{\grad g_0}^{2+2\si} dh \right)
  \end{align}
  for any $y_0 \in B_{h}(x_0,R)$, $0<r<\ell<1$ with $B_h(y_0,4\ell) \subseteq B_{h}(x_0,R)$, $t \leq T$
  where $1>v(\si)>0$ is a small constant depending on $\si>0$, 
  and 
  \begin{align*}
    &\int_{B_{h}(x_0,R)} |\gradh g(t)-\gradh g_0|^{2+\si}dh \to 0 
  \end{align*}
  as $t \downto 0$.  
\end{thm}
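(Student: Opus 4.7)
\textbf{Proof plan for Theorem~\ref{W1siDeturckthm}.}
The plan is to prove the four claims in order: the a priori estimate \eqref{bettergradest}, the $L^p$-convergence \eqref{Lpconvergence}, the gradient energy estimate \eqref{bettergradest2}, and finally the $W^{1,2+\sigma}$-convergence.

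First, I would obtain \eqref{bettergradest} by mollification. Mollify $g_0$ to produce smooth metrics $g(i)(0)$ with $|g(i)(0)-h|\leq \ep_0(n)$ and $g(i)(0)\to g_0$ in $W^{1,2+2\si}_{\loc}(B_{h}(x_0,2R))$; apply Theorem~\ref{C0RicciDeTurckthm} to each $g(i)(0)$ to obtain solutions $g(i)(t)$ converging smoothly on compact subsets of $B_h(x_0,2R)\times(0,T]$ to $g(t)$ (by Theorem~\ref{L2lemmas}). Applying Theorem~\ref{W1siRicciDeTurck} on each $g(i)$ gives \eqref{eq:SobolevEstimate2} uniformly in $i$; passing to the limit (smooth convergence for fixed $t>0$ on the left, $W^{1,2+2\si}$-convergence on the right, Fatou for the time integral) yields \eqref{bettergradest}. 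For \eqref{Lpconvergence}, since $|g(t)-g_0|\leq 3\ep_0(n)$ uniformly, I would simply write $|g(t)-g_0|^p\leq (3\ep_0(n))^{p-2}|g(t)-g_0|^2$ and invoke part (iv) of Theorem~\ref{L2lemmas}.

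The main obstacle is \eqref{bettergradest2}: improving the a priori $W^{1,2}$ bound to one with $o(1)$-deviation from $\int |\gradh g_0|^2$. The plan is to start from the evolution inequality \eqref{partgradhsquared} already derived in the proof of Theorem~\ref{W1siRicciDeTurck}, namely
\begin{equation*}
  \partt |\gradh g|^2 \leq g^{ab} \gradh_a \gradh_b |\gradh g|^2 - |\gradh^2 g|^2 + c(n)(1+|\gradh g|^4),
\end{equation*}
multiply by a cut-off $\eta$ satisfying the properties used in the proof of Theorem~\ref{W1siRicciDeTurck}, integrate by parts in space and in time, and absorb the cross terms using the standard estimates on $|\gradh \eta|^2/\eta$ and $|\gradh^2 \eta|$. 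The resulting inequality contains a bad quartic term $\int_0^t\!\int \eta|\gradh g|^4 \, dh\,ds$. To control it, I apply Hölder in space,
\begin{equation*}
  \int \eta|\gradh g(s)|^4 dh \leq c(n,r)\Bigl(\int \eta|\gradh g(s)|^{4+2\si}dh\Bigr)^{\!4/(4+2\si)},
\end{equation*}
and then Jensen in time (exploiting concavity of $x\mapsto x^{4/(4+2\si)}$) combined with the bound on $\int_0^t\!\int |\gradh g|^{4+2\si}$ from \eqref{bettergradest}. This yields
\begin{equation*}
  \int_0^t\!\int \eta|\gradh g|^4 \, dh\,ds \leq c \cdot t^{2\si/(4+2\si)},
\end{equation*}
so that $v(\si):=2\si/(4+2\si)>0$ gives \eqref{bettergradest2}. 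The residual lower-order terms $\int_0^t\!\int (|\gradh \eta|^2/\eta+|\gradh^2\eta|)|\gradh g|^2$ are controlled linearly in $t$ via Theorem~\ref{L2lemmas}(v).

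Finally, for $W^{1,2+\si}$-convergence I would proceed in two steps. First, the $L^2$-convergence $g(t)\to g_0$ together with the uniform $L^{2+2\si}$-bound on $\gradh g(t)$ from \eqref{bettergradest} gives, by integration by parts against smooth compactly supported test fields and reflexivity, weak $L^2$-convergence $\gradh g(t)\rightharpoonup \gradh g_0$ on $B_h(x_0,R)$. Combined with \eqref{bettergradest2}, which gives $\limsup_{t\downto 0}\int|\gradh g(t)|^2\leq \int|\gradh g_0|^2$, the identity
\begin{equation*}
  \int |\gradh g(t)-\gradh g_0|^2 = \int|\gradh g(t)|^2 - 2\int \gradh g(t)\cdot\gradh g_0 + \int|\gradh g_0|^2
\end{equation*}
forces strong $L^2$-convergence of $\gradh g(t)$ to $\gradh g_0$. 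The Hölder interpolation
\begin{equation*}
  \int|\gradh(g(t)-g_0)|^{2+\si} \leq \Bigl(\int|\gradh(g(t)-g_0)|^2\Bigr)^{1/2}\Bigl(\int|\gradh(g(t)-g_0)|^{2+2\si}\Bigr)^{1/2}
\end{equation*}
then yields the desired $W^{1,2+\si}$-convergence, since the second factor is uniformly bounded by \eqref{bettergradest} and the triangle inequality. The crucial input at every stage is the quantitative rate $t^{v(\si)}$ in \eqref{bettergradest2}, which is where the restriction $\si>0$ enters.
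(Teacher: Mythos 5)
Your plan is correct and follows essentially the same route as the paper: mollification and a diagonal limit for \eqref{bettergradest}, the uniform $C^0$ bound to pass from $L^2$ to $L^p$, a Hölder-type estimate in time to get $t^{v(\sigma)}$-decay of the bad quartic term (your Hölder-in-space-then-Jensen-in-time is equivalent to the paper's single space-time Hölder with exponents $\frac{4+2\sigma}{4}$ and $\frac{4+2\sigma}{2\sigma}$, giving the same $v(\sigma)=\frac{2\sigma}{4+2\sigma}$), and finally the weak-convergence-plus-norm-limsup argument to upgrade to strong $L^2$ convergence of the gradients, followed by Cauchy--Schwarz interpolation. One small imprecision to tidy up: \eqref{bettergradest2} controls $\int_{B(y_0,r)}|\gradh g(t)|^2$ by $\int_{B(y_0,\ell)}|\gradh g_0|^2$ for $r<\ell$, so the claimed limsup $\int|\gradh g(t)|^2\leq\int|\gradh g_0|^2$ over a fixed ball does not follow literally; you must first pick, by absolute continuity of the Lebesgue integral, an outer radius $S$ slightly larger than the inner one with $\int_{B(S)}|\gradh g_0|^2\leq\int_{B(r)}|\gradh g_0|^2+\varepsilon$, exactly as the paper does with its choice of $S\in[\tfrac{5}{4}R,\tfrac{4}{3}R]$.
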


\begin{proof}
The solution $g(t)$ constructed in \cite{simon2002deformation} is obtained as follows. We start by mollifying the initial value $g_0$ to obtain a sequence of initial values $g_i(0)$ with $|g_i(0)- h| \leq \ep(n)$, and $g_i(0) \to g_0$ in $W^{1,2+2\si}_{\loc}$. Then we use Theorem~\ref{C0RicciDeTurckthm} and Theorem~\ref{W1siRicciDeTurck} to obtain solutions $g_i(t)_{t \in [0,T(n,K_0)]}$ satisfying the estimates of Theorem~\ref{C0RicciDeTurckthm} and Theorem~\ref{W1siRicciDeTurck}. We proceed to take a limit to obtain a solution $g(t)_{t \in (0,T(n,K_0)]}$, and the estimates given in Theorem~\ref{C0RicciDeTurckthm} and Theorem~\ref{W1siRicciDeTurck} hold. 
In particular, the estimate \eqref{bettergradest} holds. 
Conclusion (v) of Theorem~\ref{L2lemmas} tells us that $\int_{B_{h}(x_0, R)} |g(t)-g_0|^{2} dh \to 0$ as $t \downto 0$.
Hence
\begin{align*}
  \int_{B_{h}(x_0, R)} |g(t)-g_0|^{p} dh \leq c(n,K_0)^{p-2} \int_{B_{h}(x_0, R)} |g(t)-g_0|^{2} dh \to 0
\end{align*}
as $t \downto 0$, so \eqref{Lpconvergence} is established.

We now proceed to show that the estimate \eqref{bettergradest2} holds. We scale, if necessary, so that $K_0 + K_1 \leq 1$. 
We consider the smooth solutions $g_i(t)$ with $g_i(0)$ being the mollified $g_0$ (as above), but for the sake of exposition we write $g(t)$ for $g_i(t)$.
From \eqref{partgradhsquared} we see 
\begin{equation}
  \label{part2gradhsquared}
  \pd{}{t} \abs{\grad g}^2 \leq g^{ab} \, \grad_a \grad_b \abs{\grad g}^2 - |\grad^2 g|^2 + c(n)(1+|\grad g|^4).
\end{equation}
  
Let $\eta$ be a cut-off function (in space) satisfying $0 \leq \eta \leq 1$, $\eta \equiv 1$ in $B_h(y_0,r)$, $\eta \equiv 0$ in $B_h(y_0,\frac{r+\ell}{2})^c$, and $\frac{|\grad \eta|^2}{\eta} + \frac{|\grad \eta|^4}{\eta^3} + \frac{|\grad^2 \eta|^2}{\eta} \leq c(n,r,\ell)$. 
Then, integrating \eqref{part2gradhsquared} over space and using integration by parts, we obtain 
\begin{align*}
  &\pd{}{t} \int_M \eta \abs{\grad g}^2 dh \\
  &\quad \leq \int_M \big( \eta g^{ab} \, \grad_a \grad_b \abs{\grad g}^2 - \eta|\grad^2 g|^2 + c(n)\eta(1+|\grad g|^4) \big) dh \\
  &\quad \leq \int_M \big( c(n)|\gradh \eta|  |\grad^2 g| |\grad g|  + c(n) \eta |\grad^2 g| |\grad g|^2  \\
  &\qqqquad - \eta|\grad^2 g|^2 + c(n)\eta(1+|\grad g|^4) \big) dh \\
  &\quad \leq \int_M \big( c(n)|\gradh \eta| |\grad^2 g| |\grad g| - \eta|\grad^2 g|^2 + c(n)\eta(1+|\grad g|^4) \big) dh \\
  &\quad \leq \int_M \Big( c(n)\frac{|\gradh \eta|^2}{\eta}|\grad g|^2 - \frac{3}{4} \eta|\grad^2 g|^2 + c(n)\eta(1+|\grad g|^4) \Big )dh \\
  &\quad \leq \int_M \Big( c(n)\frac{|\gradh \eta|^4}{\eta^3} - \frac{1}{2} \eta|\grad^2 g|^2 + c(n)\eta(1+|\grad g|^4) \Big) dh \\
  &\quad \leq c(n,r,\ell) + c(n)  \int_{B_h(y_0,\frac{r + \ell}{2})}  |\grad g|^4 dh.
\end{align*}

In the following, we choose constants $1<\hat r(\si), \hat v(\si) < \infty$ with
$\hat r(\si) = \frac{4+2\si}{4}$ and $\frac{1}{\hat r(\si)} + \frac{1}{\hat v(\si)} = 1$ and we define
$r(\si) = \frac{1}{\hat r(\si)}$ and $v(\si) = \frac{1}{\hat v(\si)}$ (note that $r(\si),v(\si)<1$).
Integrating in time, we get
  \begin{align*}
  &\int_{B_h(y_0,r)}  \abs{\grad g}^2 dh \\
  &\leq \int_{B_h(y_0,\frac{r+ \ell}{2})} \abs{\grad g(0)}^2 dh + c(n,r,\ell)t + c(n) \int_0^t \int_{B_h(y_0, \frac{r+ \ell}{2})} |\grad g|^4(x,s) dh(x) ds \\
  &\leq \int_{{B_h(y_0, \frac{r + \ell}{2})}} \abs{\grad g(0)}^2 dh + c(n,r,\ell) t \\
  &\quad + c(n) \Big(\int_0^t \int_{B_h(y_0, \frac{r + \ell}{2})} |\grad g|^{4+2\si}(x,s) dh(x)ds \Big)^{r(\si)} \Big(\int_0^t \int_{B_h(y_0,  \frac{r+ \ell}{2})} 1\, dh ds \Big)^{v(\si)}\\
  &\leq \int_{B_h(y_0,\ell)} \abs{\grad g(0)}^2 dh + c(n,r,\ell) t \\
  &\quad + c(n,r,\ell) \Bigg( \int_{B_h(y_0,\ell)} \abs{\grad g(0)}^{2+2\si} dh \\
  &\qqqquad + c(n,R,K_0,K_1) \Big( 1 + \int_{B_h(y_0,\ell)} |\grad g_0|^2 dh \Big) \Bigg)^{r(\si)} t^{v(\si)} \\
  &\leq \int_{B_h(y_0,\ell)} \abs{\grad g(0)}^2 dh + c(n,r,\ell,R,K_0,K_1) t^{v(\si)} \Big( 1 + \int_{B_h(y_0,\ell)}  \abs{\grad g(0)}^{2+2\si} dh \Big).
\end{align*}
That is, this inequality holds for the smooth solutions $g_i(t)$.
Letting $i\to \infty$, we see that the estimate \eqref{bettergradest2} holds.

Using a similar argument to the one used in the proof of Theorem~5.8 of \cite{lamm2023ricci}, we see, for $\si>0$, that the estimates \eqref{bettergradest} and \eqref{bettergradest2} can be used to obtain
\begin{equation*}
  \int_{B_{h}(x_0,\frac{6}{5}R)} |\gradh g(t) - \gradh g_0|^2 \to 0 \quad \text{ as } t \downto 0.
\end{equation*}
For the reader's convenience we reproduce the argument, suitably modifying the spaces so that they are appropriate to the setting being considered here.
We consider the Hilbert space $(H,\langle \cdot , \cdot \rangle_H)$ with $H = \{ \text{metrics } g \text{ defined in } B_{h}(x_0,\frac{5}{4}R) \text{ with } |g-h| \leq \ep(n) \text{ and } g \in W^{1,2}(B_{h}(x_0,\frac{5}{4}R)) \}$ and 
\begin{equation*}
  \langle g,k \rangle_H := \int_{B_{h}(x_0,\frac{5}{4}R)} h(g,k) + h(\gradh g , \gradh k) dh. 
\end{equation*}

Taking any sequence $(g(t_i))$ with $t_i \downto 0$, we see that $(g(t_i))$ is uniformly bounded (independent of $i$) in $(H,\langle \cdot , \cdot \rangle_{H})$, and hence there is a subsequence $(g(t_{i_j}))$ which weakly converges to a value $\ell \in H$ as $j \to \infty$ with respect to  $\langle \cdot , \cdot \rangle_{H}$.
Due to the fact that $\int_{B_{h}(x_0,\frac{7}{4} R)} |g(t)-g_0|^{2} dh \to 0$ as $t \downto 0$, we see that $\ell = g_0$.
This implies
\begin{equation}
  \label{Hilbert}
  \int_{B_{h}(x_0,\frac{5}{4}R)} h(\gradh g(t_{i_j}), \gradh g_0) dh \to \int_{B_{h}(x_0,\frac{5}{4}R)} |\gradh g_0|_h^2 dh \quad \text{ as } j \to \infty.
\end{equation}
For any $\ep>0$, we can find $\frac{5}{4}R \leq S \leq \frac{4}{3}R$ such that 
\begin{equation*}
  \int_{B_{h}(x_0, S)} |\gradh g_0|^2 dh \leq \int_{B_{h}(x_0,\frac{5}{4}R)} |\grad g_0|^2 dh + \ep.
\end{equation*}
But then, combining this with \eqref{Hilbert} and \eqref{bettergradest2}, we obtain 
\begin{align*}
  &\int_{B_{h}(x_0,\frac{5}{4}R)} |\gradh g(t_{i_j}) - \gradh g_0|_h^2 dh \\
  &\quad = \int_{B_{h}(x_0, \frac{5}{4}R)} \Big( |\gradh g(t_{i_j})|^2 + |\gradh g_0|^2 - 2 h(\gradh g(t_{i_j}), \gradh  g_0) \Big) dh \\
  &\quad \leq \int_{B_{h}(x_0, S)} |\gradh g_0|^2 dh +  t^{v(\si)}_{i_j} C(g_0,R,S) \\
  &\qquad + \int_{B_{h}(x_0, \frac{5}{4}R)} \Big(|\gradh g_0|^2 - 2h(\gradh g(t_{i_j}), \gradh g_0) \Big) dh \\
  &\quad \leq \ep + \int_{B_{h}(x_0,\frac{5}{4}R)} |\gradh g_0|^2 dh +   t^{v(\si)}_{i_j} C(g_0,R,S) \\
  &\qquad + \int_{B_{h}(x_0, \frac{5}{4}R)} \Big( |\gradh g_0|^2 - 2h(\gradh g(t_{i_j}), \gradh g_0) \Big) dh \\
  &\quad \to \ep \quad \text{ as } j \to \infty.
\end{align*}
That is, for any sequence $t_i \to 0$, we can find a subsequence $(t_{i_{j}})$ such that $g(t_{i_{j}}) \to g_0$ in $(H,\langle \cdot, \cdot \rangle_H)$ as $j \to \infty$ ($\ast$).
But this shows that $g(t) \to g_0$ in $(H,\langle \cdot, \cdot \rangle_H)$ as $t \downto 0$: If this were not the case, then we could find a number $\de >0$ and a sequence $t_i \downto 0$ such that $\langle g(t_i) - g_0 , g(t_i) - g_0 \rangle_H \geq \de$ as $i \to \infty$, which would contradict ($\ast$).

Therefore,
\begin{equation*}
  \int_{B_{h}(x_0,\frac{6}{5}R)} |g(t)-g_0|^p + |\gradh g(t) - \gradh g_0|^2 \to 0 \quad \text{ as } t \downto 0
\end{equation*}
for any $p \geq 2$.
For initial data $g_0 \in W^{1,2+2\si}(B_{h}(x_0,2R))$ we see that this implies
\begin{align*}
  &\int_{B_{h}(x_0,\frac{6}{5}R)} |\gradh g(t) - \gradh g_0|^{2+\si} dh \\
  &\quad = \int_{B_{h}(x_0,\frac{6}{5}R)} |\gradh g(t) - \gradh g_0| \, |\gradh g(t) - \gradh g_0|^{1+\si} dh \\
  &\quad \leq \left( \int_{B_{h}(x_0,\frac{6}{5}R)} |\gradh g(t) - \gradh g_0|^2 \right)^{\frac{1}{2}} \left( \int_{B_{h}(x_0,\frac{6}{5}R)} |\gradh g(t) - \gradh g_0|^{2+2\si} dh \right)^{\frac{1}{2}} \\
  &\quad \leq \left( \int_{B_{h}(x_0,\frac{6}{5}R)} |\gradh g(t) - \gradh g_0|^2 \right)^{\frac{1}{2}} \\
  &\qqquad \cdot \left( \int_{B_{h}(x_0,\frac{6}{5}R)} c \big(|\gradh g(t)|^{2+2\si} + |\gradh g_0|^{2+2\si} \big) dh \right)^{\frac{1}{2}} \\
  &\quad \to 0 \quad \text{ as } t \downto 0. \qedhere
\end{align*}
\end{proof}

\section{Preservation of weak scalar curvature bounds from below}
\label{scalarsection}

The method we use to show Theorem~\ref{main2} is the one used by Jiang--Sheng--Zhang~\cite{jiang2023weak} (and also, in a modified form, by Burkhardt-Guim~\cite{burkhardt-guim2024smoothing}). This involves testing the scalar curvature with solutions to the conjugate heat equation, and   proving estimates on the solutions.  See below for more details. 
In the paper of Jiang--Sheng--Zhang, the metrics $g_0$ considered are in $W^{1,p}(M)$ where $p > \frac{n}{2}$. In particular, there, one obtains that the solution to the Ricci--DeTurck flow satisfies $|\Rm(g(t))| \leq \frac{c}{t^r}$ (and $|\Rm(\ell(t)) |\leq \frac{c}{t^r}$ for the related Ricci flow solution $\ell(t)$) for some $0 < r(p,n) < 1$, which is integrable in time. This plays an important role in many estimates of that paper.
As this estimate is not available to us, we must prove other estimates in the class we are considering which still lead to the desired result.

We note here that the use of solutions of the conjugate heat equation in a Ricci flow background setting to obtain bounds on the scalar curvature is not new, and was used in papers appearing before \cite{jiang2023weak} and \cite{burkhardt-guim2024smoothing}: see for example \cite{bamlerricciflowproof}.

We recall now the method used in \cite{jiang2023weak} to show that bounds from below in the distributional sense will be preserved by the Ricci flow. Let $(M^n,\ell(t))_{t \in (0,T]}$ be a closed, smooth solution to the Ricci flow and $0<Y<T$.
Let $\varphi_Y: M \to \R^+$ be an arbitrary smooth function and $\varphi: M \times (0,1] \to \R^{+}_0$ a solution of the conjugate heat equation in a Ricci flow background,
\begin{equation}
  \label{conjugatehflow}
  \begin{split}
    \partial_t \varphi(\cdot,t) &= -\Delta_{\ell(t)} \varphi(\cdot,t)+\Sc_{\ell}(\cdot,t) \varphi(\cdot,t) \quad \text { on } M  \text { for all  } t \in (0,T], \\
    \varphi(\cdot,Y) &= \varphi_Y(\cdot).
  \end{split}
\end{equation} 
 
One sees using this equation, and the fact that $\partial_t \Sc_{\ell} = \lap_\ell \Sc_{\ell} + 2|\Rc_{\ell}|_{\ell}^2$, that
\begin{equation*}
  \partial_t \int_M (\Sc_{\ell(t)}(\cdot) + b) \phi(\cdot,t) d\ell(t) = \int_M 2 \phi |\Rc_{\ell(t)}|_{\ell(t)}^2 d\ell(t) \geq 0.
\end{equation*}
Hence, if $\lim_{t \downto 0} \int_M (\Sc_{\ell(t)}(\cdot)+b) \phi(\cdot,t) d\ell(t) \geq 0$, then $\int_M (\Sc_{\ell(Y)}(\cdot)+b) \phi(\cdot,Y) d\ell(Y) \geq 0$ and hence $\Sc_{\ell(Y)}(\cdot)+b \geq 0$ for all $Y \in (0,T)$ as $Y>0$ was arbitrary and $\varphi_Y: M \to \R^+$ was an arbitrary smooth function.

We consider metrics $g_0 \in W^{1,2+\si}$ which are $\ep(n)$-almost continuous and satisfy $\Sc_{g_0}(\cdot)+b \geq 0$ in the distributional sense, the definition of which we now motivate and present. 
The formula  for the  scalar  curvature of a smooth Riemannian metric $g$ can be written with the help of a smooth background metric as 
\begin{equation*}
  \Sc_{g} = \Sc_h - \div_h(Z(g,h)) + L(g,h),
\end{equation*}
where
\begin{align}
  \label{Ldefinition}
  L(g,h) &= \Sc_h - (\gradh_k g^{ij}) \curlT(g,h)^k_{ij} + (\gradh_k g^{ik}) \curlT(g,h)^j_{ji} \\
  \nonumber &\quad + g^{ij} \big(\curlT(g,h)^k_{kl} \curlT(g,h)^{l}_{ij} - \curlT (g,h)^k_{jl} \curlT(g,h)^l_{ik}\big),\\
  \nonumber \curlT(g,h)^i_{jk} &:= \frac{1}{2}g^{il}(\gradh_j g_{kl} + \gradh_k g_{jl} - \gradh_l g_{jk}),  
\end{align}
and
\begin{align}
  \nonumber Z^k &= g^{ij} \curlT(g,h)^k_{ij} - g^{ik} \curlT(g,h)^j_{ji}, \\
  \label{Zdefinition}
  Z(g,h) &= Z^{k} \cdot \frac{\partial}{\partial x^k}.
\end{align}
Testing this with a smooth function $\phi: M \to \R^+$, we see, after integrating by parts and commuting second order terms, that  
\begin{align*}
  &\int_M (\Sc_{g}+b) \phi dg \\
  &\quad = \int_M \big(\Sc_h - \div_h(Z(g,h)) +L(g,h)+b \big) \phi dh \\
  &\quad = \int_M \big( L(g,h) \left(\phi \frac{dg}{dh}\right) + h(Z(g,h),\gradh \left(\phi \frac{dg}{dh}\right)) + b \left(\phi \frac{dg}{dh}\right) \big) dh.
\end{align*} 
We sometimes write $L(g,h) = g^{-1} * g^{-1} * g^{-1} * \gradh g * \gradh g$ and $Z (g,h) = g^{-1} * g^{-1} * \gradh g$.

These considerations motivate the following definition (given in the paper of Lee--LeFloch~\cite{lee2015positive}, see also the earlier paper of LeFloch--Mardare~\cite{LeFlochMardare2007}, where various notions of distributional curvatures are presented, or the later paper of Sormani--Tian--Wang~\cite{SorTianWang}, for a version using notation similar to ours).
Here, $\frac{dg}{dh}:M \to \R^+$ is the function which is given locally by
$\frac{dg}{dh}(x)= \frac{\sqrt{\det G(x)}}{\sqrt{\det H(x)}}$, where $G(x)= (g_{ij}(x))_{i,j \in \{1,\ldots,n\}}$ and $H(x) = (h_{ij}(x))_{i,j \in \{1,\ldots,n\}}$.

\begin{defi}[Lee--LeFloch~\cite{lee2015positive}]
\label{LeeLeFloch} 
For a Riemannian metric $g \in W^{1,2}_{\loc}(M,h)$ with $\frac{1}{a} h \leq g \leq a h$ we say that \emph{$\Sc_{g}+ b \geq 0$ in the distributional sense} if 
\begin{equation*}
  \int_M \big( L(g,h) \left(\phi \frac{dg}{dh}\right) + h(Z(g,h),\gradh \left(\phi \frac{dg}{dh}\right)) + b \left(\phi \frac{dg}{dh}\right) \big) dh \geq 0
\end{equation*}
for all smooth non-negative functions $\phi: M \to \R^+$ with compact support in $M$, where $\frac{dg}{dh} = \frac{\sqrt{g}}{\sqrt{h}}$, and $L$ and $Z$ are defined in \eqref{Ldefinition} and \eqref{Zdefinition}.
\end{defi}

We will show for $g_0 \in W^{1,2+ 4\si}$ which are $\ep(n,\si)$-almost continuous and satisfy $\Sc_{g_0}(\cdot) + b \geq 0$ in the distributional sense that then a related Ricci flow solution $\ell$ does indeed satisfy $\lim_{t \downto 0} \int_M (\Sc_{\ell(t)}(\cdot)+b) \phi(\cdot,t) d\ell(t) \geq 0$, and hence $\Sc_{\ell(t)}(\cdot)+b \geq 0$ (and hence $\Sc_{g(t)}(\cdot)+b \geq 0$) in the strong sense for all $t>0$. In order to show that $\lim_{t \downto 0} \int_M (\Sc_{\ell(t)}(\cdot)+b) \phi(\cdot,t) d\ell(t) \geq 0$, we require estimates on $\phi$, $\grad \phi$, and on the solution $\ell(t)$ (respectively $g(t)$) itself.
 
We also require the following interpolation lemma, which can be shown in a straightforward manner using integration by parts, and is an $n$-dimensional Riemannian manifold version of a $1$-dimensional lemma in $\R$ of Maz'ja which appears in \cite{Mazja}, the proof being essentially the same. 
 
\begin{lem}[{cf. Lemma 1 in \cite[§ 8.2.1]{Mazja}}]
  \label{gradientlemma}
  Let $(M^n,g)$ be a smooth closed $n$-dimensional Riemannian manifold and let $f: M \to \R^+$ be a smooth function. Then we have
  \begin{equation*}
    \int_M \frac{|\gradgg  f|_{g}^4}{f^2} dg \leq c(n) \int_M |\gradgg^2 f|_{g}^2 dg.
  \end{equation*}
\end{lem}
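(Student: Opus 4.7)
The plan is to mimic Maz'ja's one-dimensional integration-by-parts proof, the only novelty being that on a Riemannian manifold the integration by parts produces a divergence whose expansion involves the full Hessian of $f$ rather than just $f''$.

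The starting point is the identity $\gradgg(1/f) = -\gradgg f/f^2$, which lets us write
\begin{equation*}
  \frac{|\gradgg f|^4}{f^2} = -|\gradgg f|^2 \, g(\gradgg f, \gradgg(1/f)).
\end{equation*}
Since $M$ is closed, integration by parts gives
\begin{equation*}
  \int_M \frac{|\gradgg f|^4}{f^2}\, dg = \int_M \frac{1}{f}\, \div_g\!\bigl(|\gradgg f|^2 \gradgg f\bigr)\, dg.
\end{equation*}
Expanding the divergence with the product rule,
\begin{equation*}
  \div_g\!\bigl(|\gradgg f|^2 \gradgg f\bigr) = 2\, \gradgg^2 f(\gradgg f, \gradgg f) + |\gradgg f|^2 \, \Delta_g f,
\end{equation*}
and so
\begin{equation*}
  \int_M \frac{|\gradgg f|^4}{f^2}\, dg = \int_M \frac{2\, \gradgg^2 f(\gradgg f, \gradgg f) + |\gradgg f|^2 \Delta_g f}{f}\, dg.
\end{equation*}

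Next I would bound both terms on the right using $|\gradgg^2 f(\gradgg f, \gradgg f)| \leq |\gradgg^2 f|_g |\gradgg f|_g^2$ and $|\Delta_g f| \leq \sqrt{n}\,|\gradgg^2 f|_g$, and then apply Cauchy--Schwarz in the form
\begin{equation*}
  \int_M \frac{|\gradgg^2 f|_g\, |\gradgg f|_g^2}{f}\, dg \leq \left(\int_M |\gradgg^2 f|_g^2\, dg \right)^{1/2} \left(\int_M \frac{|\gradgg f|_g^4}{f^2}\, dg \right)^{1/2}.
\end{equation*}
Combining the two contributions yields an inequality of the form $A \leq (2+\sqrt{n})\, B^{1/2}\, A^{1/2}$ for $A := \int_M |\gradgg f|^4/f^2\, dg$ and $B := \int_M |\gradgg^2 f|^2\, dg$, from which $A \leq (2+\sqrt{n})^2\, B$ and the claim follows with $c(n) = (2+\sqrt{n})^2$.

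The only subtlety is that the argument implicitly assumes $A$ is finite before one divides by $A^{1/2}$; since $f$ is smooth and strictly positive on the closed manifold $M$, both $1/f$ and $|\gradgg f|$ are bounded, so $A < \infty$ is automatic and the absorption step is justified. No substantial obstacle is anticipated.
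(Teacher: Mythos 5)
Your proof is correct and uses essentially the same strategy as the paper's: integrate by parts once and absorb the resulting lower-order term via Cauchy--Schwarz/Young, using finiteness of $\int_M |\gradgg f|_g^4/f^2\,dg$ (from $M$ closed, $f$ smooth and positive) to justify the absorption. The paper instead applies integration by parts to $\int_M |\gradgg(f^{1/2})|_g^4\,dg$ and then re-expands the Hessian of $f^{1/2}$ in terms of $\gradgg^2 f$ and $\gradgg f$, which is algebraically heavier but equivalent to your direct pairing against $\gradgg(1/f)$; your version also makes the explicit constant $(2+\sqrt{n})^2$ immediate.
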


\begin{proof}
In the following, $\gradg$ denotes $^{g}\nabla$, the covariant derivative with respect to $g$, and $|\cdot|_g$ denotes the norm of a tensor with respect to $g$.
Integrating the quantity $|\gradg f^{\frac{1}{2}}|_g^4$, we obtain
\begin{align*}
\int_M |\gradg f^{\frac{1}{2}}|_g^4 dg &= \int_M g^{kl} g^{rs} \gradg_k (f^{\frac{1}{2}}) \gradg_l (f^{\frac{1}{2}}) \gradg_r (f^{\frac{1}{2}}) \gradg_s (f^{\frac{1}{2}}) dg \\
&= - \int_M g^{kl} g^{rs} f^{\frac{1}{2}} \gradg_k \gradg_l(f^{\frac{1}{2}}) \gradg_r (f^{\frac{1}{2}}) \gradg_s (f^{\frac{1}{2}})dg \\
&\quad - 2 \int_M g^{kl} g^{rs} f^{\frac{1}{2}} \gradg_l(f^{\frac{1}{2}}) \gradg_k \gradg_r (f^{\frac{1}{2}}) \gradg_s (f^{\frac{1}{2}})dg \\
&= - \frac{1}{2} \int_M g^{kl} g^{rs} f^{\frac{1}{2}} \gradg_k \big( (\gradg_l f) \cdot f^{-\frac{1}{2}} \big) |\gradg (f^{\frac{1}{2}})|_g^2 dg \\
&\quad - \int_M g^{kl} g^{rs} f^{\frac{1}{2}} \gradg_l(f^{\frac{1}{2}}) \gradg_k \big( (\gradg_r f) \cdot f^{-\frac{1}{2}} \big) \gradg_s (f^{\frac{1}{2}}) dg \\
&= - \frac{1}{2} \int_M g^{kl} (\gradg_k \gradg_l f) |\gradg (f^{\frac{1}{2}})|_g^2 dg \\
&\quad + \frac{1}{4} \int_M f^{-1} g^{kl} (\gradg_l f)( \gradg_k f) |\gradg (f^{\frac{1}{2}})|_g^2 dg \\
&\quad - \int_M g^{kl} g^{rs} \gradg_l (f^{\frac{1}{2}}) \gradg_s (f^{\frac{1}{2}}) (\gradg_k \gradg_r f) dg \\
&\quad + \frac{1}{2} \int_M f^{-1} g^{rs}g^{kl} \gradg_l (f^{\frac{1}{2}}) (\gradg_k f) (\gradg_r f) \gradg_s (f^{\frac{1}{2}}) dg \\
&= \int_M \gradg^2 f * \gradg f^{\frac{1}{2}} * \gradg f^{\frac{1}{2}} dg + \int_M |\gradg (f^{\frac{1}{2}})|_g^4 + 2 \int_M |\gradg (f^{\frac{1}{2}}) |_g^4 dg, 
\end{align*}
and hence
\begin{align*}  
  \int_M |\gradg (f^{\frac{1}{2}})|_g^4 dg &= - \int_M \gradg^2 f * \gradg f^{\frac{1}{2}} * \gradg f^{\frac{1}{2}} dg \\
  &\leq  c(n) \int_M |\gradg^2 f|_g^2 dg + \frac{1}{2} \int_M |\gradg (f^{\frac{1}{2}})|_g^4 dg,
\end{align*}
which implies
\begin{align*} 
  \frac{1}{4} \int_M \frac{|\gradg f|_g^4}{f^2} dg &= \int_M |\gradg (f^{\frac{1}{2}})|_g^4 dg \\
  &\leq c(n)\int_M |\gradg^2 f|_g^2 dg,
\end{align*}
as claimed.
\end{proof}

For fixed $(y, s) \in M \times \left(0, T \right]$, the conjugate heat kernel in a Ricci flow background setting is the function $K(y, s; x, t)$, defined for $0 \leq t < s$ and $x \in M$ satisfying
\begin{equation}
  \label{conjugatehkernel}
  \begin{split}
    \left(-\partial_t-\Delta_{\ell(t)} + \Sc_{\ell(t)}(x) \right) K(y, s; x, t) &= 0, \\
    \lim _{t \upto s} K(y, s; x, t) &= \delta_y(x),
  \end{split}
\end{equation}
where $\delta_y$ is the Dirac measure supported on $\{y\}$. For fixed $(x,t) \in M \times (0,T)$, $K(x,t;\cdot,\cdot)$ also satisfies $\left(\partial_s-\Delta_{\ell(s)}\right) K(y, s; x, t) = 0$, where $\Delta_{\ell(s)}$ is the Laplacian with respect to $\ell(s)$. 
In Chapter~24 of \cite{chowetal},
it is shown that \eqref{conjugatehkernel} has a solution, and hence there is a solution $\varphi$ of \eqref{conjugatehflow} given by 
\begin{equation}
  \label{conj}
  \varphi(x,t) = \int_M K(y, Y; x, t) \varphi_Y(y) d \mu_{\ell(Y)}(y).
\end{equation}

In the following, we use the notation $f^{-}(\cdot) = \max(-f(\cdot),0)$ for any function $f: X \to \R$.

\begin{thm}\label{conjugatethm}
  Let $M^n$ be a closed manifold and $(M,\ell(t))_{t \in (0,T]} $ a smooth solution of the Ricci flow, with $\Sc_{\ell}(t) \geq -\frac{\ep}{t}$ for all $t \in (0,T]$ for some constant $\ep>0$.
  For $0<Y\leq T$,  and  any smooth function $\phi_Y: M \to \R_0^+$, let $\phi: M \times (0,Y] \to \R_0^+$ be the solution \eqref{conj} of the conjugate heat equation \eqref{conjugatehflow} with $\phi(\cdot,Y) = \phi_Y(\cdot)$.
  \begin{enumerate}[label=(\roman*)]
    \item We then have
      \begin{equation*}
        \phi(\cdot,t) \leq \frac{Y^{\ep} \sup_M \phi_Y}{t^{\ep}} \quad \text{ for all } t \in (0,Y].
      \end{equation*}
    \item Let $p \in [1,\infty)$, $\al  \in (1,\infty)$ and $r = \frac{\al}{\al-1}$ satisfy $\ep p r <1$. If we further assume that  
      \begin{align*} 
        L_{\al} &:= \int_0^{T} \int_M |{\Sc^{-}_{\ell}} |^{\al}(x,s) d\ell(x,s) \, ds < \infty, \\
        V &:= \sup_{t \in (0,T]} \vol(M,\ell(t)) < \infty,
      \end{align*}
      then we have
      \begin{equation*}
        \int_M \phi^p(t) d\ell(t) \leq c(p,\al,\ep,\phi_Y,V,L_{\al},Y) < \infty \quad \text{ for all } t \in (0,Y].
      \end{equation*}
  \end{enumerate}
\end{thm}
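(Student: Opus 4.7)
The plan is to separate the two parts and handle them with different tools: part (i) is a pointwise $L^{\infty}$-bound coming from the parabolic maximum principle after time reversal, while part (ii) is an $L^{p}$-estimate obtained by differentiating $\int_{M}\phi^{p}\,d\ell(t)$ in time and combining (i) with a double application of Hölder's inequality.

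For (i), since $\phi$ satisfies a backward parabolic equation on $(0,Y]$, I would introduce $\tau:=Y-t\in[0,Y)$ and $\widetilde\phi(\cdot,\tau):=\phi(\cdot,Y-\tau)$. Then $\widetilde\phi$ solves the forward linear parabolic equation
\begin{equation*}
  \partial_{\tau}\widetilde\phi=\Delta_{\ell(Y-\tau)}\widetilde\phi-\Sc_{\ell(Y-\tau)}\widetilde\phi
\end{equation*}
on the closed manifold $M$ with initial data $\widetilde\phi(\cdot,0)=\phi_{Y}\geq 0$ and zeroth-order coefficient bounded above by $\frac{\ep}{Y-\tau}$ (because $-\Sc_{\ell}\leq\ep/(Y-\tau)$). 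The scalar parabolic maximum principle together with Grönwall's inequality then yields
\begin{equation*}
  \sup_{M}\widetilde\phi(\cdot,\tau)\leq \sup_{M}\phi_{Y}\cdot\exp\left(\int_{0}^{\tau}\frac{\ep}{Y-s}\,ds\right)=\sup_{M}\phi_{Y}\cdot\left(\frac{Y}{Y-\tau}\right)^{\ep},
\end{equation*}
which after substituting $t=Y-\tau$ is exactly (i); the same maximum principle also shows $\phi\geq 0$, so $\phi=\phi^{+}$ throughout.

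For (ii), using $\partial_{t}\phi=-\Delta_{\ell(t)}\phi+\Sc_{\ell(t)}\phi$ together with $\partial_{t}\,d\ell(t)=-\Sc_{\ell(t)}\,d\ell(t)$ and integration by parts on the closed manifold,
\begin{equation*}
  \partial_{t}\int_{M}\phi^{p}\,d\ell(t)=p(p-1)\int_{M}\phi^{p-2}|\nabla\phi|_{\ell(t)}^{2}\,d\ell(t)+(p-1)\int_{M}\Sc_{\ell(t)}\phi^{p}\,d\ell(t)\geq -(p-1)\int_{M}\Sc^{-}_{\ell(t)}\phi^{p}\,d\ell(t),
\end{equation*}
where the gradient term is non-negative for $p\geq 1$ (the case $p=1$ is the conservation $\partial_{t}\int_{M}\phi\,d\ell=0$, which settles it). Integrating on $[t,Y]$ gives
\begin{equation*}
  \int_{M}\phi^{p}(t)\,d\ell(t)\leq\int_{M}\phi_{Y}^{p}\,d\ell(Y)+(p-1)\int_{t}^{Y}\int_{M}\Sc^{-}_{\ell(s)}\phi^{p}(s)\,d\ell(s)\,ds.
\end{equation*}
I would then apply Hölder in space with exponents $\al$ and $r$, use the pointwise bound from (i) to estimate $\bigl(\int_{M}\phi^{pr}\,d\ell\bigr)^{1/r}\leq (Y^{\ep}\sup_{M}\phi_{Y})^{p}s^{-\ep p}V^{1/r}$, and then apply Hölder in time with the same exponents, so that the remaining time integral is controlled by $\bigl(\int_{0}^{Y}s^{-\ep pr}\,ds\bigr)^{1/r}L_{\al}^{1/\al}$.

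The one delicate point is the bookkeeping at $s=0$: the pointwise bound from (i) contributes the singular weight $s^{-\ep}$, and $\Sc^{-}$ is only controlled in $L^{\al}$ in spacetime, so the two Hölder exponents have to be threaded through both the space and the time integral in such a way that the resulting weight $s^{-\ep pr}$ remains integrable on $(0,Y]$. This is precisely where the hypothesis $\ep pr<1$ enters; once it is in force the remaining estimates are elementary, and one simply collects constants to produce the claimed bound $c(p,\al,\ep,\phi_{Y},V,L_{\al},Y)$.
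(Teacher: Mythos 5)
Your proposal is correct and follows essentially the same route as the paper: part (i) via time-reversal, the bound $-\Sc_\ell\leq \ep/(Y-\tau)$, and the parabolic maximum principle (you phrase it with Grönwall, the paper with the explicit comparison function $(Y-\tau)^{\ep}\psi$, but these are the same estimate), and part (ii) by differentiating $\int_M\phi^p\,d\ell(t)$, dropping the non-negative gradient term, inserting the pointwise bound from (i), and applying Hölder with exponents $\al$ and $r$ in space and time so that the hypothesis $\ep p r<1$ makes the singular time-weight integrable. The only superficial differences are that the paper keeps the computation in the reversed time variable $\tau$ and applies Hölder once on the spacetime measure rather than sequentially in space and then time; the constants and logic are the same.
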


\begin{proof}
For $\tau \in [0,Y)$ we write $\psi: M \times [0,Y) \to \R^+_0$, $\psi(x,\tau): = \phi(x,Y-\tau)$ 
for all $x\in M$, $\tau \in [0,Y)$.  Then we see that 
\begin{equation*}
  \parttau \psi(x,\tau) = \lap_{\ell(Y-\tau)}\psi(x,\tau) - \Sc_{\ell(Y-\tau)}(x)\psi(x,\tau),
\end{equation*}
for all $x\in M$, $\tau \in [0,Y)$.  
In order to prove (i), as a first step we estimate 
\begin{equation*}
  \parttau \psi(x,\tau) \leq \lap_{\ell(Y-\tau)}\psi(x,\tau) + \frac{\ep}{Y-\tau} \psi(x,\tau).
\end{equation*}

Arguing as in the proof of the ODE lemma~C.1 in Lamm--Simon~\cite{lamm2023ricci} (see also the proof of Theorem~2.3 in Bamler--Chen~\cite{bamlerchen} or the proof of Theorem~2.9 in Burkhardt-Guim~\cite{burkhardt-guim2024smoothing}, where a similar ODE argument was used), we get
\begin{align*}
  \parttau (Y-\tau)^{\ep} \psi (x,\tau) &= (Y-\tau)^{\ep} \parttau  \psi(x,\tau) - \ep(Y-\tau)^{\ep-1} \psi(x,\tau)\\
  &\leq \lap_{\ell(Y-\tau)} \big((Y-\tau)^{\ep} \psi(x,\tau)\big) \\
  &\quad + \ep {(Y-\tau)}^{\ep-1} \psi(x,\tau) - \ep(Y-\tau)^{\ep-1} \psi(x,\tau) \\
  &\leq  \lap_{\ell(Y-\tau)} \big((Y-\tau)^{\ep} \psi(x,\tau)\big) 
\end{align*}
for all $x\in M$, $\tau \in [0,Y)$. 
Using the maximum principle, we see for $t= Y-\tau$ that
\begin{equation}
  \label{intermedpsi} 
  t^{\ep} \phi(x,t) = (Y-\tau)^{\ep} \psi( x,\tau) \leq Y^{\ep} \sup_M \psi(\cdot,0) = Y^{\ep} \sup_M \phi_Y,
\end{equation}
which means
\begin{equation*}
  \phi(x,t) \leq \frac{Y^{\ep} \sup_M \phi_Y}{t^{\ep}} 
\end{equation*}
for all $x \in M$, $t\in [0,Y)$.
This proves (i).

To prove the second estimate, we calculate
\begin{align*}
  &\parttau \int_M \psi^p(\tau) d\ell(Y-\tau) \\
  &\quad = \int_M \Big( \lap_{\ell(Y-\tau)} (\psi^p (\tau)) - p(p-1) \psi^{p-2}(\tau)|\grad_{\ell(Y-\tau)} \psi(\tau)|_{\ell(Y-\tau)}^2   \\
  &\qqqquad -(p - 1)\psi^p (\tau) \Sc_{\ell(Y-\tau)} \Big)d\ell(Y-\tau) \\
  &\quad \leq \int_M |(p- 1)\psi^p(\tau) \Sc^{-}_{\ell(Y-\tau)}| d\ell(Y-\tau) \\
  &\quad \leq \frac{c(p,\phi_Y,Y)}{(Y-\tau)^{p\ep}} \int_M |\Sc^{-}_{\ell(Y-\tau)}| d\ell(Y-\tau),
\end{align*}
in view of \eqref{intermedpsi}.    
Integrating in time from $\tau =0$ to $\tau =S$,   and using the Hölder inequality for $\al \in (1,\infty)$, $r := \frac{\al}{\al -1}$ (so that $\frac{1}{\al} + \frac{1}{r} =  1$), we obtain   
\begin{align*}
  &\int_M  \psi ^p(x,S) d\ell(Y-S)(x) \\
  &\quad \leq \int_M \psi ^p(x,0) d\ell(Y)(x) + \int_0^S \int_M \frac{c_0(p,\phi_Y,Y)}{(Y-\tau)^{p\ep}} |\Sc^{-}_{\ell(Y-\tau)}| d\ell(Y-\tau ) d\tau \\
  &\quad \leq \int_M \psi^p(x,0) d\ell(Y)(x) \\
  &\qquad + \Big(\int_0^S\int_M \frac{c_0^{r}(p,\phi_Y,Y)}{(Y-\tau)^{rp\ep}} d\ell(Y-\tau ) d\tau \Big)^{\frac{1}{r}} \\
  &\qqquad \cdot \Big(\int_0^S \int_M |\Sc^{-}_{\ell(Y-\tau)} |^{\al}  d\ell(Y-\tau) \, d\tau \Big)^{\frac{1}{\al}} \\
  & \quad \leq \int_M \psi^p(x,0) d\ell(Y)(x) + c_1(V,p,\ep,\phi_Y,Y,\al)(L_{\al})^{\frac{1}{\al}},
\end{align*}
as required. 
\end{proof}

With this information at hand, we are ready to prove further estimates for solutions of the Ricci--DeTurck flow (and the related Ricci flow solutions) which start from a $W^{1,2+\si}$-metric.
 
\begin{thm}
  For any $n\in \N$, $\si\in (0,\frac{1}{4})$ there exists a small $0<\ep_1(\si,n) \leq \ep_0(n)$, $\ep_0(n)$ from Theorem~\ref{W1siDeturckthm}, such that the following holds.
  Let $(M^n,h)$ be a smooth, connected, closed $n$-dimensional Riemannian manifold and let $g_0 \in W^{1,2+2\si}(M)$ be such that
  \begin{equation*}
    |g_0-h| \leq \ep_1(\si,n).
  \end{equation*}
  Assume further, for some $b \in \R$, that $\Sc_{g_0} + b \geq 0$ in the distributional sense (see Definition~\ref{LeeLeFloch}).

  Then the solution $g(t)_{t\in (0,T(n,K_0)]}$ of the Ricci--DeTurck flow from Theorem~\ref{C0RicciDeTurckthm} constructed in \cite{simon2002deformation} satisfies $\Sc_{g(t)} +b \geq 0$ (respectively $\Sc_{\ell(t)} + b \geq 0$ for any related Ricci flow solution $\ell(t)$ from Section~\ref{RDeTurckRicci}) for all $t \in (0,T)$ in the smooth sense.
\end{thm}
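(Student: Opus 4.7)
The approach follows the conjugate-heat-equation strategy outlined at the start of Section~\ref{scalarsection}. Fix $Y \in (0,T]$ and an arbitrary smooth nonnegative $\phi_Y : M \to \R$. Take a related Ricci flow $\ell(t) = (\Phi(t))^* g(t)$ with $\Phi(Y) = \mathrm{Id}$, so that $\ell(Y) = g(Y)$, and let $\phi(\cdot,t)$ solve the conjugate heat equation \eqref{conjugatehflow} backwards from $\phi(\cdot,Y) = \phi_Y$ as in \eqref{conj}. Set $F(t) := \int_M (\Sc_{\ell(t)} + b)\phi(\cdot,t)\,d\ell(t)$. The identity $\partial_t F = 2\int_M \phi\,|\Ric_{\ell(t)}|^2_{\ell(t)}\,d\ell(t) \geq 0$ is a direct computation (using $\partial_t \Sc_\ell = \Delta_\ell \Sc_\ell + 2|\Ric_\ell|^2$, $\partial_t d\ell = -\Sc_\ell d\ell$, and the conjugate heat equation), so the theorem reduces to showing $\liminf_{t\downto 0} F(t) \geq 0$: together with the arbitrariness of $\phi_Y$ and $\Phi(Y) = \mathrm{Id}$, this yields the pointwise bounds $\Sc_{\ell(Y)} + b \geq 0$ and $\Sc_{g(Y)} + b \geq 0$.

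For the $\liminf$ bound I would pull back via $\Phi(t)$: setting $\tilde\phi(y,t) := \phi(\Phi(t)^{-1}(y),t)$ and $u(t) := dg(t)/dh$, one obtains $F(t) = \int_M (\Sc_{g(t)} + b)\,\tilde\phi(t)\,dg(t)$. Since $\tilde\phi(t)$ is smooth and nonnegative for each $t > 0$ (the latter by the maximum principle applied to the backward equation), the integration-by-parts identity motivating Definition~\ref{LeeLeFloch} gives
\begin{equation*}
  F(t) = \int_M \Big[ L(g(t),h)\,\tilde\phi(t) u(t) + h\bigl(Z(g(t),h), \gradh(\tilde\phi(t) u(t))\bigr) + b\,\tilde\phi(t) u(t) \Big]\,dh,
\end{equation*}
while applying the distributional hypothesis with the smooth nonnegative test function $\tilde\phi(t)$ yields
\begin{equation*}
  \int_M \Big[ L(g_0,h)\,\tilde\phi(t) u_0 + h\bigl(Z(g_0,h), \gradh(\tilde\phi(t) u_0)\bigr) + b\,\tilde\phi(t) u_0 \Big]\,dh \geq 0,
\end{equation*}
where $u_0 := dg_0/dh$. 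Subtracting, $F(t) \geq E(t)$, where $E(t)$ collects the error terms from replacing $(g_0, u_0)$ by $(g(t), u(t))$ inside the bracket.

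The remaining task is to prove $E(t) \to 0$ as $t \downto 0$. Each summand of $E(t)$ pairs a coefficient difference—$L(g(t),h) - L(g_0,h) \to 0$ in $L^{1+\sigma/2}$, $Z(g(t),h) - Z(g_0,h) \to 0$ in $L^{2+\sigma}$, or $u(t) - u_0 \to 0$ in $W^{1,2+\sigma}$, all from the convergence in Theorem~\ref{W1siDeturckthm}—with either $\tilde\phi(t) u(t)$ or $\gradh(\tilde\phi(t) u(t))$, which must be controlled uniformly in $t$ in the corresponding dual norm. For $\tilde\phi(t)$ the needed $L^q$-bounds come from Theorem~\ref{conjugatethm}(ii), whose hypothesis $\int_0^T \int_M |\Sc_\ell^-|^\alpha\,d\ell\,ds < \infty$ is satisfied with $\alpha = 2$ by the $\sigma = 0$ case of Theorem~\ref{W1siRicciDeTurck} (giving $\gradh g \in L^4$ and $\gradh^2 g \in L^2$ in space-time, whence $|\Sc_\ell| \lesssim 1 + |\gradh^2 g| + |\gradh g|^2$ is in $L^2$). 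For $\gradh \tilde\phi(t)$ a parabolic energy estimate for $|\gradh\phi|^2$ along the Ricci flow yields the required control, with the nonlinear quartic term $|\gradh\phi|^4/\phi^2$ arising in the computation absorbed via the interpolation bound $\int |\gradh\phi|^4/\phi^2 \leq c\int |\gradh^2\phi|^2$ of Lemma~\ref{gradientlemma}; the pullback distortion from $\Phi(t)$ is controlled by $|\gradh g(t)| \leq C/\sqrt{t}$ from Theorem~\ref{C0RicciDeTurckthm}.

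The main obstacle is calibrating the exponent $\varepsilon$ in the pointwise bound $\Sc_\ell \geq -\varepsilon/t$ so that the constraint $\varepsilon p r < 1$ of Theorem~\ref{conjugatethm}(ii) is compatible with $p, r$ large enough to dualize against the $L^{1+\sigma/2}$ and $L^{2+\sigma}$ differences above. The generic bound $\varepsilon = c(n, h)$ comes straight from $|\gradh g|^2, |\gradh^2 g| \leq c/t$; shrinking $\varepsilon_1(\sigma, n)$ and, if necessary, restricting to a shorter subinterval $(0, T'] \subset (0, T]$ is what ensures $\varepsilon$ is small enough for every coefficient-difference pairing in $E(t)$ to vanish. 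The strict positivity of $\sigma$ enters twice: it provides the extra integrability of $\gradh g$ needed in the convergence statements of Theorem~\ref{W1siDeturckthm}, and it creates the room in the dual exponents needed to close the error estimates.
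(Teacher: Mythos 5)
Your proposal follows the same conjugate-heat-equation strategy as the paper, and most of the ingredients you list are the ones actually used: the identity $\partial_t\int_M(\Sc_\ell+b)\phi\,d\ell\geq 0$, the pull-back to the Ricci--DeTurck picture, the application of the distributional hypothesis with test function $\tilde\phi(t)$, the decomposition into error terms paired with coefficient differences, the $L^p$-bounds on $\tilde\phi$ via Theorem~\ref{conjugatethm}(ii) with $\alpha=2$ fed by the spacetime $L^2$-bound on $|\Rm(g(t))|$ from the $\sigma=0$ case of Theorem~\ref{W1siRicciDeTurck}, and the $W^{1,2+\sigma}$-convergence statements from Theorem~\ref{W1siDeturckthm}. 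So the architecture of your argument matches the paper's.

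The step you leave loose, however, is the one that is actually the crux: establishing a uniform-in-time bound on $\int_M|\gradh\tilde\phi(t)|^2\,dh$. You say ``a parabolic energy estimate for $|\gradh\phi|^2$ along the Ricci flow yields the required control, with the nonlinear quartic term $|\gradh\phi|^4/\phi^2$ arising in the computation absorbed via Lemma~\ref{gradientlemma}.'' This description misattributes the quartic term and omits the coupling that makes the estimate close. If you compute $\partial_t E(t)$ for $E(t)=\int_M|\gradell\phi|^2\,d\ell$ directly, the quartic term does \emph{not} appear; what appears, as recorded from Jiang--Sheng--Zhang, is a good Hessian term $+|\gradell^2\phi|^2$ together with bad curvature terms $-C|\Rm(\ell)|\,|\gradell\phi|^2-C|\Rm(\ell)|^2\phi^2$. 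The last term cannot be handled by Gr\"onwall on $E$ alone, because $|\Rm(\ell(t))|$ blows up like $1/t$ as $t\downto 0$ and is only known to be $L^2$ in spacetime; pairing it with $\|\phi(t)\|_{L^\infty}^2\lesssim t^{-2\ep}$ does not obviously give a convergent integral. The paper circumvents this by simultaneously tracking the Ricci--DeTurck weighted gradient energy $F(t)=\int_M(1+\Lambda|g-h|^2)|\gradh g|^2\,\hat\phi^2\,dh$; it is in the evolution of \emph{this} quantity that the quartic term $|\gradell\phi|^4/\phi^2$ arises (from $|\gradh\hat\phi|^2\omega$-type cross terms), and where a favourable term $-\frac{1}{c(n)}\int\hat\phi^2|\Rm(g)|^2\,dg$ appears. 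Lemma~\ref{gradientlemma} converts the quartic term into a Hessian term, which is then dominated by the good Hessian term in $\partial_t E$; the negative $|\Rm|^2\phi^2$ contribution from $\partial_t(-\sqrt\Lambda F)$ cancels the bad $|\Rm|^2\phi^2$ contribution from $\partial_t E$. One then proves $\partial_t(E(t)-\sqrt\Lambda F(t)+Ct)\geq 0$ and concludes the desired bound on $E$ from the a priori boundedness of $F$. Without this coupled two-energy mechanism, your proposal has no evident way to absorb the curvature terms in $\partial_t E$, so as written the gradient-of-$\phi$ step is a genuine gap. (The remark that ``the pullback distortion from $\Phi(t)$ is controlled by $|\gradh g(t)|\leq C/\sqrt t$'' is also not needed: the identity $\int_M|\gradgt\hat\phi|^2_{g(t)}\,dg(t)=\int_M|\gradell\phi|^2_{\ell(t)}\,d\ell(t)$ holds by diffeomorphism invariance of the integral, with no rate entering.)
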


\begin{proof}
We scale the initial data once, if necessary, so that $K_0 + K_1 \leq 1$.
Let $0<Y<T$ and the smooth function $\phi_Y:M \to \R^+$ be given and $\phi: M \times (0,Y] \to \R^+$ be the solution \eqref{conj} to the conjugate heat flow equation \eqref{conjugatehflow} with background Ricci flow $\ell(t) := (\Phi(t))^{*}g(t)$, where $\Phi$ comes from \eqref{ODEDe} for some fixed $S \in (0,T)$.
As explained above, the solution exists: see Chapter~24 of \cite{Chow-Lu-Ni}.

We define $F(t) := \int_{M} \hat{\phi}^2(x,t)f(x,t) dh$, where $f(x,t) := (1+r(x,t))|\grad g(t)|^2$, $\hat \phi(x,t) := \phi((\Phi_t)^{-1}(x),t)$, and $r(x,t) := 1 + \La |g-h|^2(x,t)$.
Using $\hat \phi(\Phi(y,t),t)= \phi(y,t)$, and $\partt \Phi(y,t) = V( \Phi(y,t))$, where $V$ is the vector field defined in \eqref{defnV}, and the fact that $\ell(t) = \Phi_t^*g(t)$, we see that
\begin{align*}
\partt \hat \phi(x,t) = -\lap_{g(t)} \hat \phi (x,t) + \Sc_{g(t)}(x) \hat \phi(x,t) 
-h(\grad \hat \phi(x,t),V(x,t)).
\end{align*}
We use this freely in the following calculations. 
Also, we notice from  \eqref{eq:SobolevEstimate2} that 
\begin{align*}
&\int_{0}^T \int_{M}  |\Rm(g(t))|_{g(t)}^2 dg(t) \leq c(n) \int_{0}^T \int_{M}  |\Rm(g(t))|_{h}^2 dh \leq C(g_0)
\end{align*}
and hence, for $p(\si) = \frac{2+ \si}{\si}$,
\begin{align}
\int_{M}  {\phi}^{p(\si)}(x,t) d\ell(t) \leq C< \infty \label{littlephi}
\end{align} 
if $\ep_1(\si,n) \leq \min(\si^3,\ep_0(n))$, in view of (ii) in Theorem~\ref{conjugatethm} with the choice of $\al=2$, since $\ep_1(\si) p(\si) \cdot 2 \leq 2 \si^2(2+\si) <1$.

Defining
\begin{equation*}
  F(t) = \int_{M} (1+r(x,t)) |\grad g(t)|^2 \hat{\phi}^2(x,t) dh,
\end{equation*}
we estimate
\begin{align*}
  F(t) &\leq C \int_{M} |\grad g(t)|^{2+\si}dh + C \int_{M} \hat{\phi}^{p(\si)}(x,t) dh \\
  &\leq C +  C \int_{M} \hat{\phi}^{p(\si)}(x,t) dg(t) \\
  &= C + C \int_{M} {\phi}^{p(\si)}(x,t) d\ell(t) \\
  &\leq C  < \infty 
\end{align*}
for all $t \in (0,Y]$, where we used \eqref{bettergradest} and \eqref{littlephi}, and
$\frac{1}{2} h \leq g(t) \leq 2h$ for all $t\in (0,T)$.

We will use the following facts in the calculations that follow: $\lap_g z = g^{ij} \, \gradh^2_{ij} z + g^{-1}*g^{-1} * \gradh g * \grad z$, which holds for any smooth function $z$, and hence, integrating by parts, we see for any smooth functions $f$ and $z$ that
\begin{align*}
  &\int_M \big( z g^{ij} \, \gradh^2_{ij} f - f \lap_g z \big) dh \\
  &\quad = \int_M \big( z g^{ij} \, \gradh^2_{ij} f - f g^{ij} \, \gradh^2_{ij} z \big) dh + \int_M (f g^{-1}*g^{-1} * \gradh g * \grad z) dh \\
  &\quad = \int_M \big( z g^{ij} \, \gradh^2_{ij} f + g^{ij} \, \gradh_i f \gradh_{j} z +   f  g^{-1}* g^{-1} * \gradh g * \grad z \big) dh \\
  &\quad = \int_M \big( g^{-1} * g^{-1} * \gradh g * \gradh f * z   + f g^{-1}*  g^{-1} * \gradh g * \grad z \big) dh .
\end{align*}

We choose $z := \hat \phi^2$, and, as above, $f(x,t) := (1+r(x,t))|\grad g(t)|^2$, $r(x,t) := 1 + \La |g-h|^2(x,t)$ with $\Lambda = \frac{1}{\sqrt{\ep_0(n)}}$ in this estimate. We then use the above estimates and \eqref{finalomega} to estimate the time derivative of $F(t)$ as follows:  
\begin{align*}
  \partt F(t) &= \partt \int_M (1+r(x,t)) |\grad g(t)|^2 \hat{\phi}^2(x,t) dh \\
  &\leq \int_M \big(\hat \phi^2 g^{ij} \, \gradh^2_{ij} f - f \lap_{g(t)}\hat \phi^2 \big) dh + \int_M f |\gradgg \hat \phi|_g^2 dh \\
  &\quad + \int_M \hat \phi^2(x,t) \big( c(n) - |\gradh^2 g|^2 - \La|\gradh g|^4 \big) dh \\
  &\quad + \int_M 2 f \Sc_{g}\hat \phi^2 - f h(x)( \gradh \hat \phi^2 (x,t), V(x,t) ) dh \\
  &\leq \int_M \big( g^{-1} * g^{-1} * \gradh g * \gradh f * \hat \phi^2  + f \hat \phi g^{-1} * g^{-1} * \grad g * \grad \hat \phi \big) dh \\
  &\quad - \int_M \hat \phi^2 ( |\gradh^2 g|^2 + \La|\gradh g|^4) dh +c(n)\int_M f|\gradg \hat \phi|^2 dh \\
  &\quad + K(n) \int_M  f^2 |\hat{\phi}|^2 dh + \frac{1}{K(n)} \int_M \hat \phi^2 |\Rm(g(t))|^2 dh + c(n) \int_M \hat \phi^2 dh\\
  &\leq \int_M (\gradh^2 g * \gradh g + \gradh g * \gradh g * \gradh g) * (\gradh g * \hat{\phi}^2 + \hat \phi * \gradh \hat \phi) dh \\
  &\quad - \frac{1}{2}\int_M \hat \phi^2(x,t) ( |\gradh^2 g|^2 + \La |\gradh g|^4) dh + c(n)\int_M f |\grad \hat \phi |^2 dh \\
  &\quad + \frac{1}{K(n)} \int_M \hat \phi^2 |\Rm(g(t))|^2 dh + c(n) \int_M \hat \phi^2 dh \\
  &\leq \int_M c(n)f |\gradh \hat \phi|^2 dh + c(n) \int_M \hat \phi^2 dh  \\
  &\quad - \frac{1}{c(n)}\int_M \hat \phi^2(x,t) (|\gradh^2 g|^2 + \La |\gradh g|^4) dh \\
  &\leq \int_M \Big( \frac{100c^2(n)}{\La} \frac{|\gradh \hat \phi |^4 }{\hat \phi^2}  + \frac{\Lambda}{100c(n)}f^2 \hat \phi^2 \Big) dh \\
  &\quad + c(n) \int_M \hat \phi^2 dh - \frac{1}{c(n)}\int_M \hat \phi^2(x,t) (|\gradh^2g |^2 +\La |\gradh g|^4) dh, \\
  &\leq \int_M \frac{100c^2(n)}{\La} \frac{|\gradh \hat \phi |^4 }{\hat \phi^2} dh   + c(n) \int_M \hat \phi^2 dh \\
  &\quad - \frac{1}{2c(n)}\int_M \hat \phi^2(x,t) (|\gradh^2g |^2 + \La |\gradh g|^4) dh, \\
  &\leq \int_M \Big(\frac{100c^2(n)}{\La} \frac{|\gradgt \hat \phi|_{g(t)}^4 }{\hat \phi^2} - \frac{1}{2c(n)} \hat \phi^2(t) |\Riem(g(t))|_{g(t)}^2\Big) dg(t) \\
  &\quad + c(n) \int_M \hat \phi^2 dh,
\end{align*} 
where we used that 
\begin{equation*}
  |\Riem(g)|^2 \leq c(n) \big( |\gradh^2 g|^2 + |\gradh g|^4 \big),
\end{equation*}
as well as
\begin{equation*}
  \gradh^2 g * \gradh g * \gradh g \leq \frac{1}{P} |\gradh^2 g|^2 + P|\gradh g|^4
\end{equation*}
for $P>0$,  
\begin{equation*}
 \frac{1}{c(n)} |\gradh g|^4 \leq f^2 \leq c(n) |\gradh g|^4,
\end{equation*}
and $\frac{1}{2}h \leq g(t) \leq 2h$.
Hence 
\begin{align*}
  \partt F(t)
  &\leq \int_M \Big(\frac{100c^2(n)}{\La} \frac{|\gradgt \hat \phi |_{g(t)}^4 }{\hat \phi^2} - \frac{1}{2c(n)} \hat \phi^2(t) |\Riem(g(t))|_{g(t)}^2\Big) dg(t) +C(\phi_Y)\\
  &= \int_M \left(\frac{100c^2(n)}{\La} \frac{|\gradell \phi (t) |_{\ell(t)}^4}{\phi(t)^2} - \frac{1}{2 c(n)} \phi^2(t) |\Riem(\ell(t))|_{\ell(t)}^2 \right) d\ell(t)+C(\phi_Y) \\
  &\leq \int_M \left(\frac{c^3(n)}{\La} |\gradell^2 \phi(t) |_{\ell(t)}^2(t) - \frac{1}{2c(n)} \phi^2(t) |\Riem(\ell(t))|_{\ell(t)}^2 \right) d\ell(t)+C(\phi_Y),
\end{align*}
where we have used Lemma~\ref{gradientlemma}, and the fact that $\int_M |\hat \phi|^2dh$ is uniformly bounded (independent of time) in view of (ii) of Theorem~\ref{conjugatethm}.

We now consider $E(t) = \int_M |\gradell \phi(t)|_{\ell(t)}^2 d\ell(t)$.
From the paper of Jiang--Sheng--Zhang~\cite{jiang2023weak}, we see that
\begin{align*}
  \partt E(t) &\geq \int_M \Big( \frac{3}{2} |\gradell^2 \phi(t)|_{\ell(t)}^2(t) \\
  &\qqqquad - C(n) |\Riem(\ell(t))|_{\ell(t)} |\gradell\phi(t)|_{\ell(t)}^2(t) \\
  &\qqqquad - C(n)|\Riem(\ell(t))|_{\ell(t)}^2\phi^2(t) \Big) d\ell(t),
\end{align*}
and hence, using Lemma~\ref{gradientlemma}, we have   
\begin{equation*}
  \partt E(t) \geq \int_M \Big(|\gradell^2 \phi(t)|_{\ell(t)}^2 - C(n)|\Riem(\ell(t))|_{\ell(t)}^2\phi^2(t) \Big) d\ell(t).
\end{equation*}
Combining the two inequalities, we see 
\begin{equation*}
  \partt \big( E(t)-\sqrt{\La} F(t) +C(\phi_Y)t \big) \geq 0.
\end{equation*}

Hence, integrating in time we obtain
\begin{equation*}
  E(t) \leq E(Y) - \sqrt{\La} F(Y) + \sqrt{\La} F(t)  +C(\phi_Y)Y  \leq C_1(\phi_Y,\Lambda,n,Y)< \infty
\end{equation*}
for all $t\in (0,Y)$, since $F(t)$ is uniformly bounded, as explained above.
We have thus shown that
\begin{equation*}
  \int_M |\gradell \phi(t)|_{\ell(t)}^2 d\ell(t) \leq C_1(\phi_Y, n, \si)
\end{equation*}
for all $t\in (0,Y)$,
where $C_1(\phi, n, \si)$ is a constant which is independent of time. 
Therefore,
\begin{align*}
  \int_M |\gradh \hat \phi(t)|_{h}^2 dh
  &\leq c(n) \int_M |\gradgt \hat \phi(t)|_{g(t)}^2 dg(t) \\
  &= c(n) \int_M |\gradell \phi(t)|_{\ell(t)}^2 d\ell(t) \\
  &\leq C_1(\phi, n, \si),
\end{align*}
for all $t \in (0,Y)$,
where $C_1(\phi, n, \si)$ is a constant which is independent of time.

We are now in a position to show that $\lim_{t \downto 0} \int_M (\Sc_{\ell(t)}(\cdot)+b) \phi(\cdot,t) d\ell(t) \geq 0$. In the following, we write $L(g)$ and $Z(g)$ for $L(g,h)$ and $Z(g,h)$ coming from Definition~\ref{LeeLeFloch}, as $h$ is fixed.  We further assume in the following that $0<\ep_1(\si,n)$ satisifies $\ep_1(\si,n) \leq \min(\ep_0(n),\si^3)$: This guarantees, for $p(\si) = \frac{2(2+\si)}{\si}$, that $ \int_M |\hat \phi|^{p(\si)} dh \leq c(n, \phi_Y, \si) < \infty$ in view of (ii) of Theorem~\ref{conjugatethm} (with $\al = 2$), and $\int_M |\grad \hat \phi|^{2} dh < c(n, \phi_Y, \si) < \infty$, as we showed above.
That is,
\begin{align*}
\int_M \big( |\hat \phi|^{p(\si)} + |\grad \hat \phi|^{2} \big) dh \leq c(n, \phi_Y, \si) < \infty,
\end{align*}
which we use freely in the following. 

Assuming that the distributional scalar curvature at time zero is not less than $-b$, we use the estimates for $g(t)$ and $\hat \phi$ from above to obtain the following:
\begin{align}
  \nonumber &\int_M \Big(L(g_0) \left(\hat \phi(t) \frac{dg(t)}{dh}\right) + h(Z(g_0),\gradh \left(\hat \phi(t) \frac{dg(t)}{dh}\right)) + b \hat \phi(t) \frac{dg(t)}{dh} \Big) dh \\
  \nonumber &\quad = \int_M \Big(L(g_0) \left(\hat \phi(t) \frac{dg(t)}{dh}\right) + h(Z(g_0),\gradh \hat \phi(t) \frac{dg(t)}{dh} + \hat \phi(t) \gradh \left(\frac{dg(t)}{dh}\right)) \\
  \nonumber &\qqqquad + b \hat \phi(t) \left(\frac{dg(t)}{dh}\right)\Big) dh \\
  \nonumber &\quad = \int_M \Big( L(g_0) \hat \phi(t) \left(\frac{dg(t)}{dh} - \frac{dg_0}{dh}\right) + h(Z(g_0),\gradh \hat \phi(t) \left(\frac{dg(t)}{dh} - \frac{dg_0}{dh}\right)) \\
  \nonumber &\qqqquad + h(Z(g_0), \hat \phi(t) \gradh \left(\frac{dg(t)}{dh} - \frac{dg_0}{dh}\right)) + b \hat \phi(t) \left(\frac{dg(t)}{dh} - \frac{dg_0}{dh}\right)\Big) dh \\
  \nonumber &\qquad + \int_M \Big(L(g_0) \left(\hat \phi(t) \frac{dg_0}{dh}\right) + h(Z(g_0),\gradh \hat \phi(t) \left(\frac{dg_0}{dh}\right)) \\
  \nonumber &\qqqquad + \hat \phi(t) \gradh \left(\frac{dg_0}{dh}\right) + b \hat \phi(t) \left(\frac{dg_0}{dh}\right) \Big) dh \\
  \nonumber &\quad \geq \int_M \Big( (L(g_0) + b )\hat \phi(t) \left(\frac{dg(t)}{dh} - \frac{dg_0}{dh} \right) + h(Z(g_0),\gradh \hat \phi(t) \left(\frac{dg(t)}{dh} - \frac{dg_0}{dh}\right)) \\
  \nonumber &\qqqquad + h(Z(g_0), \hat \phi(t) \gradh \left(\frac{dg(t)}{dh} - \frac{dg_0}{dh}\right))   \Big) dh \\
  \nonumber & [\text{since the distributional scalar curvature of } g_0 \text{ is not less than } -b \text{ and } |\Sc_h| \leq c(n)] \\
  \nonumber &\quad \geq -|b|\left(\int_M \hat \phi(t)^2 dh \right)^{\frac{1}{2}} \left(\int_M \Big|\frac{dg(t)}{dh} - \frac{dg_0}{dh}\Big|^2  \right)^{\frac{1}{2}} \\
  \nonumber &\qquad - c(n) \left(\int_M |\gradh g_0|^{2+\si} dh\right)^{\frac{2}{2+\si}} \left(\int_M \hat \phi(t)^{p(\si)} dh\right)^{\frac{1}{p(\si)}} \\
  \nonumber &\qqquad \cdot \left(\int_M \Big|\frac{dg(t)}{dh} - \frac{dg_0}{dh}\Big|^{p(\si)} dh\right)^{\frac{1}{p(\si)}} \\
  \nonumber &\qquad - c(n) \left(\int_M |\gradh g_0|^{2} \left|\frac{dg(t)}{dh} - \frac{dg_0}{dh}\right|^2 dh \right)^{\frac{1}{2}} \left(\int_M |\gradh \hat \phi(t)|^{2} dh \right)^{\frac{1}{2}} \\
  \nonumber &\qquad - c(n) \left(\int_M |\gradh g_0|^{2+\si}dh \right)^{\frac{2}{2+\si}} \left(\int_M |\hat \phi(t)|^{  p(\si) } dh \right)^{\frac{1}{p(\si)}} \\
  \nonumber &\qqquad \cdot \left(\int_M \left|\gradh \left(\frac{dg(t)}{dh}-\frac{d g_0}{dh}\right)\right|^{2} dh\right)^{\frac{1}{2}} \\
  \nonumber &\quad \geq - c(n,b,h) \left(1+\int_M |\gradh g_0|^{2+\si} dh \right)^{\frac{2}{2+\si}} \left(\int_M \hat \phi(t)^{p(\si)} dh \right)^{\frac{1}{p(\si)}} \\
  \nonumber &\qqquad \cdot \left(\int_M \left|\frac{dg(t)}{dh} - \frac{dg_0}{dh}\right|^{p(\si)} dh\right)^{\frac{1}{p(\si)}} \\
  \nonumber &\qquad - c(n) \left(\int_M |\gradh g_0|^{2+\si} dh \right)^{\frac{1}{2+\si}} \left(\int_M \left|\frac{dg(t)}{dh} - \frac{dg_0}{dh} \right|^{p(\si)} dh \right)^{\frac{1}{p(\si)}} \\
  \nonumber &\qqquad \cdot \left(\int_M |\gradh \hat \phi(t)|^{2} dh \right)^{\frac{1}{2}} \\
  \nonumber &\qquad - c(n) \left(\int_M |\gradh g_0|^{2+\si} dh \right)^{\frac{2}{2+\si}} \left(\int_M |\hat \phi(t)|^{ p(\si) }dh \right)^{\frac{1}{p(\si)}} \\
  \nonumber &\qqquad \cdot \left(\int_M \left|\gradh \left(\frac{dg(t)}{dh}-\frac{d g_0}{dh}\right)\right|^{2} dh\right)^{\frac{1}{2}} \\
  &\quad =: \mathcal{E}(\phi_Y,n,\si,b,h,t) \to 0 \quad \text{ as } t \downto 0,
  \label{almostdistrib}
\end{align}
where we used 
\begin{align*}
  \int_M \left|\frac{dg(t)}{dh} - \frac{dg_0}{dh}\right|^{p} dh &\leq c(n)^p \int_M |g(t)-g_0|_h^{p} dh \\
  &\leq  c(n)^{2p-2} \int_M |g(t)-g_0|_h^{2} dh \\
  &\to 0
\end{align*}
as $t \downto 0$ for any $p \geq 2$, if $\frac{1}{2} h \leq g(t) \leq 2 h$, and 
\begin{align*}
  \gradh \left(\frac{dg}{dh}-\frac{d g_0}{dh}\right) 
  &= g^{-1}* \grad g  *\frac{dg }{dh}   - g_0^{-1}* \grad g_0 *\frac{dg_0}{dh}  \\
  &=  \left(g^{-1} -g_0^{-1}\right)* \grad g  *\frac{dg }{dh} \\
  &\quad  +g_0^{-1} *  \left(\grad g -\grad g_0\right)\frac{dg }{dh} \\
  &\quad  +g_0^{-1} * \grad g_0* \left(\frac{dg }{dh} -\frac{dg_0}{dh} \right)  
\end{align*}
to estimate 
\begin{align*}
  &\left(\int_M  \left|\gradh \left(\frac{dg(t)}{dh}-\frac{d g_0}{dh}\right)\right|^{2} dh\right)^{\frac{1}{2}}  \\
  &\leq  c(n) \left(\int_{M} |\grad g(t) |^{2+\sigma} +   |\grad g_0 |^{2+\si} dh \right)^{\frac{1}{2+\si}}\left(\int_M |g(t)-g_0|^{p(\sigma)} dh\right)^{\frac{1}{p(\sigma)}} \\
  &\quad +   c(n) \left(\int_{M} |\grad g(t)-\grad g_0|^2 dh\right)^{\frac{1}{2}} \\
  &\to 0
\end{align*}
as $t \downto 0$.

Recalling that the definition of the distributional scalar curvature is derived from integration by parts in the smooth setting, we see
\begin{align*}
  &\int_M (\Sc_{\ell(t)}(\cdot)+b) \phi(\cdot,t) d\ell(t) \\
  &\quad = \int_M (\Sc_{g(t)}(\cdot)+b) \hat \phi(\cdot,t) dg(t) \\
  &\quad = \int_M \Big( L(g(t)) \left(\hat \phi \frac{dg(t)}{dh}\right) + h(Z(g(t)),\gradh \left(\hat \phi \frac{dg(t)}{dh}\right) ) + b \left(\hat \phi \frac{dg(t)}{dh}\right) \Big) dh \\
  &\quad = \int_M \Big( L(g_0) \left(\hat \phi \frac{dg(t)}{dh}\right) + h(Z(g_0),\gradh \left(\hat \phi \frac{dg(t)}{dh}\right) ) + b \left(\hat \phi \frac{dg(t)}{dh}\right) \Big) dh \\
  &\qquad + \int_M \Big( (L(g(t))-L(g_0)) \left(\hat \phi \frac{dg(t)}{dh}\right) \\
  &\qqqqquad + h(Z(g(t)) - Z(g_0),\gradh \left(\hat \phi \frac{dg(t)}{dh}\right) ) \Big) dh \\
  &\quad \geq \mathcal{E}(t) \\
  &\qquad  - c(n)\left(\int_M |\grad g(t)- \grad g_0|^{2+\si} dh \right)^{\frac{2}{2+\si}} \left(\int_M \hat \phi^{p(\si)} dh \right)^{\frac{1}{p(\si)}} \\
  &\qqquad \cdot  \left( \int_M |\grad g(t)|^2 +|\grad g_0|^2 dh \right)^{\frac{1}{2}} \\
  &\qquad  -c(n) \left( \int_M |\grad g_0|^{2+\si}dh \right) \left( \int_M |g(t)-g_0|^{p(\si)}dh \right)^{\frac{1}{p(\si)}} \left( \int_M \hat \phi^{p(\si)}dh \right)^{\frac{1}{p(\si)}} \\
  &\qquad - c(n) \left(\int_M |\gradh \left(\hat \phi \frac{dg(t)}{dh}\right)|^2 \right)^{\frac{1}{2}} \left(\int_M |Z(g_0) -Z(g(t))|^2 dh \right)^{\frac{1}{2}} \\
  &\quad \to 0 \quad \text{as }  t \downto 0
\end{align*}
since $\int_M |\gradh g(t)-\gradh g_0|^{2+2\si} \to 0$ as $t \downto 0$, where we used \eqref{almostdistrib}, and 
\begin{align*}
  L(g) - L(g_0) & = \grad g * \grad g *g^{-1}*g^{-1}*g^{-1} - \grad g_0 * \grad g_0 *g_0^{-1}*g_0^{-1}*g_0^{-1} \\
  &=  (\grad g -\grad g_0)* \grad g *g^{-1}*g^{-1}*g^{-1} \\
  &\quad + \grad g_0 * (\grad g-\grad g_0) *g^{-1}*g^{-1}*g^{-1}\\
  &\quad + \grad g_0 * \grad g_0*( g^{-1}*g^{-1}*g^{-1} - g_0^{-1}*g_0^{-1}*g_0^{-1} ) 
\end{align*}
when estimating $\int_M \Big( (L(g(t))-L(g_0)) \left(\hat \phi \frac{dg(t)}{dh}\right)$ from below, and 
\begin{align*}
  Z(g) -Z(g_0) & = \grad g *g^{-1}*g^{-1} - \grad g_0 *g_0^{-1}*g_0^{-1}\\
  & = (\grad g -\grad g_0) *g^{-1}*g^{-1} + \grad g_0 *(g^{-1}*g^{-1} -g_0^{-1}*g_0^{-1})
\end{align*}
when estimating  $- \int_M |Z(g_0)-Z(g(t))|^2 dh$ from below, and 
\begin{align*}
  & \int_M |\gradh \left(\hat \phi \frac{dg(t)}{dh}\right)|^2dh\\
  &\quad \leq c(n) \int_M \left(|\gradh  \hat \phi(t)|^2 + |\hat \phi|^2 |\gradh g(t)|^2\right)dh \\
  &\quad \leq c(n) \int_M \gradh \hat \phi(t)|^2 dh + c(n) \left(\int_M |\grad g(t)|^{2+\si}\right)^{\frac{2}{2+\si}} \left(\int_M |\hat \phi|^{p(\si)} \right)^{2p(\si)} \\
  &\quad \leq c(\phi_Y,\si,n) < \infty
\end{align*}
for all $t\in (0,Y]$.
Using the argument presented at the beginning of this section, it follows that $\Sc_{g(t)} + b \geq 0$.
\end{proof}

\printbibliography

\end{document}